\newcommand\redsout{\bgroup\markoverwith{\textcolor{red}{\rule[0.5ex]{2pt}{1pt}}}\ULon}
\newcommand{\stkout}[1]{\ifmmode\text{\redsout{\ensuremath{#1}}}\else\redsout{#1}\fi}
\newcommand\reallywidecheck[1]{%
\savestack{\tmpbox}{\stretchto{%
  \scaleto{%
    \scalerel*[\widthof{\ensuremath{#1}}]{\kern-.6pt\bigwedge\kern-.6pt}%
    {\rule[-\textheight/2]{1ex}{\textheight}}
  }{\textheight}%
}{0.6ex}}%
\stackon[1pt]{#1}{\scalebox{-0.8}{\tmpbox}}%
}
\newcommand{\prob}{\mathbf{P}}
\newcommand{\erw}{\mathds{E}}
\newcommand{\e}{\mathrm{e}}
\newcommand{\comp}{\mathrm{comp}}
\newcommand{\spt}{\mathrm{supp}}
\newcommand{\WF}{\mathrm{WF}}
\newcommand{\supp}{\mathrm{supp\,}}
\newcommand{\dist}{\mathrm{dist\,}}
\newcommand{\Op}{\mathrm{Op}}
\newcommand{\mO}{\mathcal{O}}
\newcommand{\R}{\mathbf{R}}
\newcommand{\N}{\mathbf{N}}
\newcommand{\Z}{\mathbf{Z}}
\newcommand{\wt}{\widetilde}
\newcommand{\wh}{\widehat}
\newcommand{\bA}{\boldsymbol{A}}
\newcommand{\bpsi}{\boldsymbol{\psi}}
\newcommand{\bB}{\boldsymbol{B}}
\newcommand{\wit}{\widetilde}
\newcommand{\E}{\mathbb{E}}
\newcommand{\esupp}{\mathrm{ess}\mbox{-}\mathrm{supp}\,}
\newcommand{\cS}{\mathcal{S}}
\newcommand{\cT}{\mathcal{T}}
\newcommand{\cI}{\mathcal{I}}
\newcommand{\cK}{\mathcal{K}}
\newcommand{\cU}{\mathcal{U}}
\renewcommand{\geq}{\geqslant}
\renewcommand{\leq}{\leqslant}
\newcommand{\be}{\boldsymbol{e}}
\newcommand{\bx}{\boldsymbol{x}}
\newcommand{\bxi}{\boldsymbol{\xi}}
\newcommand{\bchi}{\boldsymbol{\chi}}
\newcommand{\bU}{\boldsymbol{U}}
\newcommand{\dd}{\mathrm{d}}
\newtheorem{thm}{Theorem}
\newtheorem{corollary}[thm]{Corollary}
\newtheorem{prop}[thm]{Proposition}
\newtheorem{lem}[thm]{Lemma}
\newtheorem{definition}[thm]{Definition}
\newtheorem{rem}[thm]{Remark}
\newtheorem{ex}[thm]{Example}
\newtheorem{hypo}[thm]{Hypothesis}
\numberwithin{equation}{section}
\numberwithin{thm}{section}
\def\ora{\textcolor{black}}
\tikzset{
xmin/.store in=\xmin, xmin/.default=-3, xmin=-3,
xmax/.store in=\xmax, xmax/.default=3, xmax=3,
ymin/.store in=\ymin, ymin/.default=-3, ymin=-3,
ymax/.store in=\ymax, ymax/.default=3, ymax=3,
}
\title[Improved $L^\infty$ bounds under random perturbations in negative curvature]
{Improved $L^\infty$ bounds for eigenfunctions under random perturbations in negative curvature}
\author{Maxime Ingremeau}
\address[Maxime Ingremeau]{Laboratoire J. A. Dieudonn\'e
UMR CNRS-UNS 351, 
Universit\'e C\^ote d'Azur, 
Parc Valrose, 
06108 NICE Cedex 2, France.}
\email{Maxime.Ingremeau@univ-cotedazur.fr}
\author{Martin Vogel}
\address[Martin Vogel]{Institut de Recherche Math{\'e}matique Avanc{\'e}e - UMR 7501, 
Universit{\'e} de Strasbourg et CNRS, 7 rue René-Descartes, 67084 Strasbourg Cedex, France.}
\email{vogel@math.unistra.fr}
\date{\today}
\keywords{Spectral theory; random perturbations; quantum chaos}
\begin{document}
\begin{abstract}
It has been known since the work of Avakumov\'ic, Hörmander and Levitan 
that, on any compact smooth Riemannian manifold, if $-\Delta_g \psi_\lambda = \lambda 
\psi_\lambda$, then $\|\psi_\lambda\|_{L^\infty} \leq C 
\lambda^{\frac{d-1}{4}} \|\psi_\lambda\|_{L^2}$. It is believed that, 
on manifolds of negative curvature, such a bound can be largely improved; however, only logarithmic improvements in $\lambda$ have been obtained so far.
In the present paper,  we obtain polynomial improvements over the previous bound in a generic setting, by adding a small random pseudodifferential perturbation to the Laplace-Beltrami operator.
\end{abstract}
\maketitle
\tableofcontents
\section{Introduction}
On a smooth compact Riemannian manifold $(X,g)$, the high energy (or semiclassical) properties 
of the eigenfunctions of the Laplace-Beltrami operator $\Delta_g$ are known to be 
strongly related to the classical dynamics on the manifold, i.e. the 
dynamical properties of the geodesic flow. As such, a fundamental intuition 
in the field of Quantum Chaos is that, if the geodesic flow 
is chaotic (as is the case, for instance, o\ora{f} a manifold of negative curvature), then the 
high energy eigenfunctions should be delocalized. There are several ways of 
describing the delocalization properties of 
eigenfunctions \ora{of 
the Laplace-Beltrami operator (i.e., functions 
$\psi_\lambda$ with $-\Delta_g \psi_\lambda = \lambda \psi_\lambda$)}: 
one is through the use of semiclassical measures, i.e. weak limits of the 
probability measures $|\psi_\lambda(x)|^2 \dd x$ (and their lifts to $S^*X$) 
as $\lambda\to\infty$; 
another is by studying the $L^p$ norms of $\psi_\lambda$, and in particular, 
their $L^\infty$ norms.
\par
It is has been known since the work of Avakumov\'ic, Hörmander and Levitan 
\cite{Ava, Hor, Lev} that, on any compact smooth Riemannian manifold, 
if \ora{$\psi_\lambda\in L^2$ satisfies} $-\Delta_g \psi_\lambda = \lambda \psi_\lambda$,  then
\begin{equation}\label{eq:Hor}
\|\psi_\lambda\|_{L^\infty} \leq 
	C \lambda^{\frac{d-1}{4}} \|\psi_\lambda\|_{L^2},
\end{equation}
and these bounds are saturated by zonal spherical harmonics. More generally, 
sharp bounds for the other $L^p$ norms of eigenfunctions were obtained by 
Sogge in \cite{Sogge}.
\par
It is believed that, on manifolds of negative curvature, such bounds can be 
largely improved, thus reflecting the aforementioned delocalization of 
eigenfunctions. For instance, on surfaces of negative curvature, it was 
conjectured by Sarnak \cite{Sarnak} that $\|\psi_\lambda\|_{L^\infty} = 
\mO(\lambda^{\varepsilon}\|\psi_\lambda\|_{L^2})$ for any $\varepsilon>0$. However, on negatively 
curved manifolds of higher dimension, such a bound cannot hold, since 
\cite{RuSa} gave examples of eigenfunctions whose $L^\infty$ norm grows 
polynomially.
\par
While the study of semiclassical measures of Laplace eigenfunctions on 
manifolds of negative curvature has enjoyed major advances in the past 
years, see for instance \cite{Anan,AN,DyJi,DyJiNo}, much less 
is known for the $L^\infty$ norms. B\'erard \cite{Berard} proved a 
logarithmic improvement on \eqref{eq:Hor} on manifolds of negative curvature, 
i.e. 
\begin{equation}\label{eq:Berard}
\|\psi_\lambda\|_{L^\infty} 
	\leq C \frac{\lambda^{\frac{d-1}{4}}}{\sqrt{\log \lambda}} 
		\|\psi_\lambda\|_{L^2}.
\end{equation}
In the special setting of certain arithmetic surfaces Iwaniec and Sarnak 
\cite{IwSar} obtained \ora{small} polynomial improvements on the estimate \eqref{eq:Hor}, \ora{of the form $\|\psi_\lambda\|_{L^\infty} 
	\leq C_\varepsilon \lambda^{\frac{5}{24}+ \varepsilon} 
		\|\psi_\lambda\|_{L^2}$ for all $\varepsilon>0$.} 
\par
Recently, the estimate \eqref{eq:Berard} was generalized to milder 
dynamical conditions than negative curvature by Bonthonneau \cite{Meeec} 
and Canzani and Galkowski \cite{CanGal2}. Similar logarithmic improvements 
were obtained by several authors for other $L^p$ norms, see 
\cite{HaTa,HeRi,BlaSog,CanGal}. However, no polynomial 
improvements over \eqref{eq:Hor} were obtained in non-arithmetic settings.
\\
\par
It is believed that estimates as (\ref{eq:Hor}) or (\ref{eq:Berard}) 
could more easily be improved in a \emph{generic} setting, for instance by 
changing generically the metric or adding a generic potential. This is the 
approach of the works by Eswarathasan and Toth \cite{EswTot} and 
Canzani, Jakobson and Toth \cite{CaJaTo}. Yet, in these papers, the authors 
obtain bounds on the quantity $\psi_\lambda(x)$ averaged over the perturbation,  
hence, such results do not give information about genuine eigenfunctions of 
a generically perturbed operator. 
\\
\par
The present paper deals with $L^\infty$ norms of eigenfunctions under 
generic perturbation.  Namely,  we consider a manifold of negative 
curvature and build random perturbations of the Laplace-Beltrami operator 
by adding a small random pseudo-differential operator with a symbol 
rapidly oscillating on a mesoscopic scale. Roughly speaking, our main 
result given in Theorem \ref{th:Main} below states that 
\\
\par
\emph{With overwhelming probability, we have a polynomial improvement 
over (\ref{eq:Hor}) for the eigenfunctions of the perturbed operator. 
}
\\
\par
The idea of adding a small (subprincipal) rapidly oscillating pseudo-differential 
perturbation to the Laplacian to prove better quantum chaotic properties was 
first introduced by the authors in \cite{IngVog}, and the present paper 
largely builds upon the techniques developed in \cite{IngVog}.
\section{Presentation of the result}

Let $(X,g)$ be a compact smooth Riemannian manifold of negative sectional 
curvature, connected and without boundary, and let $0<\mu_1< \mu_2$.
Let $\Delta_g$ denote the Laplace-Beltrami operator on $X$. 
\par
In the sequel, we will adopt semiclassical notations: let $h\in(0,1]$ 
be a small 
parameter (corresponding to a multiple of $\lambda^{-1/2}$ in the introduction), 
and let $\psi_h \in L^2(\R^d)$ be a normalized eigenfunction of $-h^2\Delta_g$, 
satisfying
\begin{equation}\label{eq:EIgenfunction}
-h^2\Delta_g\psi_h = E_h \psi_h,
\end{equation}
for some $E_h \in (\mu_1, \mu_2)$.
\par
The bound (\ref{eq:Hor}) can be rephrased as
\begin{equation}\label{eq:BoundLinf}
\|\psi_h\|_{L^\infty} = \mO\!\left(h^{\frac{1-d}{2}}\right).
\end{equation}
Our aim in this article will be to show that (\ref{eq:BoundLinf}) 
can be improved if a small generic perturbation is added to $-h^2\Delta_g$.
\subsection{Simple random perturbations}
Before presenting our principal result in full generality 
we begin by presenting it in a simple setting. 

We first consider the following small random perturbation 
of the semiclassical Laplacian
\begin{equation}\label{eq:SchroedingerOp_n1}
	P_h := -\frac{h^2}{2} \Delta_g + h^\alpha Q_\omega, 
	\quad \alpha \in ]0,1].
\end{equation}
Here, the random perturbation\footnote{The model of random 
perturbations we consider in \eqref{eq:SchroedingerOp_n1} 
can be seen as a 
pseudo-differential analogue of the Anderson model \cite{An58} 
from the theory of random Schrödinger operators.} $Q_\omega$ is a selfadjoint 
pseudo-differential operator given by the quantization of 
\begin{equation}\label{eq:randomSymbol}
	q_\omega(\rho) 
	:= 
	\sum_{j\in J_h} \omega_j \,q_j(\rho),
\end{equation}
where the index set $J_h$ is of cardinality $\mO(h^{-M})$ for some 
$M>0$, $\{\omega_j\}_{j\in J_h}$ is a family of \emph{independent and identically 
distributed (iid)} random variables satisfying the following 
\begin{hypo}[Hypotheses on the random variables]\label{HypPot2a}
	We suppose that the iid random variables 
	$(\omega_j)_{j\in J_h}$ are real-valued and satisfy the following assumptions:
	\begin{enumerate}
		\item We suppose that the random variables $(\omega_j)_{j\in J_h}$, \ora{ defined on a probability space $(\Omega, \prob)$,} 
		  have a common distribution with a compactly supported density 
		  $m\in C^2_c(\R;[0,+\infty[)$ with respect to the Lebesgue measure.
	\item $\mathrm{Var}(\omega_j) >0$.
	\end{enumerate}
\end{hypo}
The \emph{single-site potentials} $q_j$ are constructed 
as follows: Let $\beta \in ]0,1/2[$. We cover the energy shell 
\begin{equation}\label{eq:EnergyShell}
	\mathcal{E}_{[\mu_1,\mu_2]} 
		:= \{(x,\xi)\in T^*X ; |\xi|^2\in [\mu_1, \mu_2]\}, 
		\quad 0<\mu_1 \leq \mu_2, 
\end{equation}
by geodesic balls $B(\rho_{j,h}, h^\beta)$ of radius $h^\beta$
and centred at $\rho_{j,h}\in \mathcal{E}_{[\mu_1,\mu_2]}$, such 
that each point belongs to at most $C$ balls. We then take 
\begin{equation}\label{eq:simpleSCP}
	q_j 
	=
	\chi \left(
		h^{-\beta} \mathrm{dist}_{T^*X}(\rho_{j,h},\rho)
		\right),
\end{equation}
where $\chi\in C_c^\infty([0, \infty);[0,1])$ takes value $1$ on $[0,1]$. 
Here, $\mathrm{dist}_{T^*X}$ denotes the geodesic distance on $T^*X$ 
equipped with an arbitrary metric $g_0$ as at the beginning of Section 
\ref{sec:Class}. For instance one may take the Sasaki metric on $T^*X$ induced 
by $g$. Notice that the construction of $q_j$ yields that 
$|J_h|=\mO(h^{-\beta d})$ in \eqref{eq:randomSymbol}. 
\par
Note that the random perturbation $Q_\omega$ depends on the two parameters 
$\alpha\in]0,1]$ and $\beta\in[0,1/2[$. We require that 
\begin{equation}\label{eq:AlphBeta}
	0<\beta < \min \left( \frac{\alpha}{2}, 2-2\alpha \right).
\end{equation}
\\
\\
\textbf{Conjugating with noisy quantum dynamics.}
Working under assumption \eqref{eq:AlphBeta}, the operator whose 
eigenfunctions we will study is not 
$P_h$ \eqref{eq:SchroedingerOp_n1} itself, but rather the operator
\begin{equation}\label{eq:RandomOperator_n1}
\widetilde{P}_h
:= e^{-i \frac{t}{h} P_h} (-h^2\Delta_g) e^{i \frac{t}{h} P_h},
\end{equation}
for some fixed $t>0$, independent of $h$. By a slightly exotic version of 
Egorov's theorem \ref{app:prop.Egorov}, we have 
\begin{equation}\label{eq:Egorov1}
\begin{aligned}
\widetilde{P}_h & = e^{-i \frac{t}{h} P_h} P_h e^{i \frac{t}{h} P_h} 
	- h^\alpha  e^{-i \frac{t}{h} P_h} Q_\omega e^{i \frac{t}{h} P_h}\\
&= -h^2\Delta_g + h^\alpha Q_\omega - h^\alpha  e^{-i \frac{t}{h} P_h}Q_\omega e^{i \frac{t}{h} P_h}\\
&=: -h^2\Delta_g + h^\alpha \wt{Q}_\omega.
\end{aligned}
\end{equation}
Here\footnote{See Appendix 
\ref{sec:Appendix} for a brief review of the basic notions and notations 
of the pseudo-differential calculus which we use in this paper.\label{FN:app}}, $\wt{Q}_\omega \in \Psi^\comp_\beta(X)$ has full symbol 
\begin{equation}
	\widetilde{q}_\omega = q_\omega-(\Phi^t_h)^*q_\omega+\mO(h^{1-2\beta}) 
	\in S_\beta^{\comp},
\end{equation}
where $\Phi^t_h$ is the Hamiltonian flow associated with the symbol of $P_h$ (see (\ref{eq:HamiltonianFlow}) below).
The operator $\widetilde{P}_h$ \eqref{eq:RandomOperator_n1} may be seen 
as a perturbation of $-h^2\Delta_g$ by a random pseudo-differential operator 
$h^\alpha\wt{Q}_\beta$ with a symbol in $h^\alpha S^{\comp}_\beta(T^*X)$. 
\par
The operator $\widetilde{P}_h$ has the same eigenvalues as $-h^2\Delta_g$, 
and $\psi_h$ is an eigenfunction of $-h^2\Delta_g$ if and only if 
$\wt{\psi}_h=e^{-i \frac{t}{h} P_h} \psi_h$ is an eigenfunction of 
$\widetilde{P}_h$. Our aim will thus be to obtain improved bounds on 
$\| \wt{\psi}_h\|_{L^\infty}$ with high probability. 
%
\begin{thm}[Simple version]\label{th:Main_smpl}
	Let $0<\mu_1<\mu_2$, let $t>0$, and let $\wt{P}_h$ be as in 
	\eqref{eq:RandomOperator_n1}. There exists $\gamma>0$ such that, 
	for all $h\in (0,1]$, the following holds with probability 
	$\geq 1- \mO(h^\infty)$:
	\par
	If $\wt{\psi}_h$ satisfies $\wt{P}_h \wt{\psi}_h = E_h \wt{\psi}_h$ with 
	$E_h \in (\mu_1, \mu_2 )$, then for any $\varepsilon>0$ there 
	exists $C>0$ such that
	\begin{equation}
	\|\wt{\psi}_h\|_{L^\infty} 
	\leq C h^{\frac{1-d}{2} + \gamma - \varepsilon} \|\wt{\psi}_h\|_{L^2}.
	\end{equation}
\end{thm}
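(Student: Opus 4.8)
The plan is to exploit the classical $L^\infty$ bound \eqref{eq:Hor} together with the structure of $\wt P_h$ as a small but ``fast-oscillating'' perturbation of $-h^2\Delta_g$, following the strategy of \cite{IngVog}. The starting point is the classical observation (going back to the proof of \eqref{eq:Hor} via the wave equation, or via the semiclassical defect of spectral projectors) that for a normalized eigenfunction $\wt\psi_h$ of $\wt P_h$ with eigenvalue $E_h$, and for any $T=T_h$ bounded by a small multiple of $|\log h|$, one has
\begin{equation}
\|\wt\psi_h\|_{L^\infty}^2 \lesssim h^{1-d}\Big( \frac1T + \big(\text{off-diagonal contributions of the propagator } e^{-i\frac{s}{h}\wt P_h} \text{ for } 1\le |s|\le T\big)\Big).
\end{equation}
Concretely, one writes $\wt\psi_h = \chi\!\big((\wt P_h - E_h)/h\big)\wt\psi_h$ for a suitable cutoff, expresses this spectral cutoff as $\frac{1}{2\pi}\int \hat\chi(s)\, e^{is(E_h-\wt P_h)/h}\,ds$ with $\hat\chi$ supported in $[-T,T]$, and estimates the integral kernel at a point $x$ by splitting into $|s|\le 1$ (handled by the standard local parametrix, giving the $h^{1-d}/T$ term after the $1/T$ normalization from $\chi$) and $1\le|s|\le T$.

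The heart of the matter is the large-time part $1\le|s|\le T$. Here I would \emph{not} propagate with $e^{-i\frac{s}{h}\wt P_h}$ directly, but instead use the conjugation identity $\wt P_h = e^{-i\frac th P_h}(-h^2\Delta_g)e^{i\frac th P_h}$ to reduce to the propagator of $-h^2\Delta_g$ composed with the quantum dynamics $e^{\pm i\frac th P_h}$. By Egorov (Appendix \ref{app:prop.Egorov}), $e^{i\frac th P_h}$ conjugates a point $x$ to a Lagrangian state that is, microlocally, the pushforward of a delta-like state at $x$ under the flow $\Phi^t_h$; crucially, because the symbol of $P_h$ contains the $h^\alpha Q_\omega$ term oscillating on scale $h^\beta$, the flow $\Phi^t_h$ is an $h$-dependent perturbation of the geodesic flow whose effect is to smear the initial covector over a ball of radius $\sim h^\alpha/h^\beta = h^{\alpha-\beta}$ in a way that depends on the random variables $\omega_j$. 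The condition \eqref{eq:AlphBeta}, namely $\beta<\alpha/2$ and $\beta<2-2\alpha$, is exactly what is needed for this perturbation to be a genuine pseudodifferential perturbation in $h^\alpha S^{\comp}_\beta$ (the remainder $\mO(h^{1-2\beta})$ must be lower order, forcing $\beta<1/2$ and the coupling with $\alpha$) while still being strong enough to have a macroscopic effect. Then the long-time propagation of the resulting state under $e^{-is h\Delta_g/?}$ along the (hyperbolic) geodesic flow, combined with the probabilistic spreading over scale $h^{\alpha-\beta}$, should give a gain: instead of the deterministic bound where the wavefront set stays a single point, with overwhelming probability the mass gets distributed over a set of measure bounded below by a positive power of $h$, which converts into the polynomial gain $h^\gamma$ in the $L^\infty$ estimate.

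The probabilistic step is where the ``overwhelming probability'' and the large-deviation input enter: one needs that, uniformly over the $\mO(h^{-M})$ relevant phase-space points (so that a union bound over $J_h$ and a net of base points costs only $\log$ factors), the random symbol $\widetilde q_\omega = q_\omega - (\Phi^t_h)^*q_\omega + \mO(h^{1-2\beta})$ has fluctuations of the expected size; this uses Hypothesis \ref{HypPot2a}, in particular $\Var(\omega_j)>0$ and the $C^2_c$ density (the latter giving quantitative anti-concentration / the smoothing needed to control the relevant integrals), via a Bernstein- or Hoeffding-type concentration estimate combined with an $\varepsilon$-net argument; the $\mO(h^\infty)$ probability comes from choosing the deviation threshold to be a small power of $h$ against the $h^{-M}$ cardinality and iterating, or from sub-Gaussianity of the bounded $\omega_j$.

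\textbf{Main obstacle.} The hard part will be making precise and uniform the large-time parametrix for $e^{-i\frac sh \wt P_h}$ for $1\le |s|\le T \sim c|\log h|$: one must run a WKB/Gaussian-beam construction for the conjugated operator $-h^2\Delta_g$ along geodesics that are themselves perturbed by the noisy flow $\Phi^t_h$, keep track of the Jacobian factors (which in negative curvature decay like $e^{-c|s|}$ but also interact with the $h^{\alpha-\beta}$ transversal spreading induced by the randomness), and show that the resulting oscillatory integral, evaluated at a fixed point $x$ and then bounded with high probability over $\omega$, produces a power saving rather than just a $1/\sqrt{|\log h|}$ saving — i.e. one must beat Bérard's bound \eqref{eq:Berard}. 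Controlling the remainder terms in Egorov's theorem over the Ehrenfest-type time scale, and ensuring the $\varepsilon$-loss in the exponent is the only loss, is the delicate technical core; this is precisely where the machinery of \cite{IngVog} is invoked and adapted.
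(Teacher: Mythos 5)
Your plan does not contain the mechanism that actually yields the polynomial gain, and the route you propose cannot deliver it as described. You set up the Bérard-type scheme $\|\wt\psi_h\|_{L^\infty}^2\lesssim h^{1-d}\bigl(T^{-1}+\text{off-diagonal}\bigr)$ with $T\sim c|\log h|$; but with $T$ logarithmic this framework can only reproduce \eqref{eq:Berard}, and to convert it into a power saving one would have to either take $T\sim h^{-2\gamma}$ (far beyond any time scale on which the WKB/parametrix constructions of \cite{IngVog} or anyone else are controlled) or show that the randomness makes the off-diagonal terms polynomially small — which is exactly the content of the theorem. Your ``Main obstacle'' paragraph concedes this: the entire difficulty is deferred to a step for which no argument is given. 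There are also two local misconceptions: the perturbed flow does not ``smear'' a covector over a ball of radius $h^{\alpha-\beta}$ (the displacement $\delta h^{-\beta}=h^{\alpha-\beta}$ is small and is precisely what \eqref{eq:PetitCoupDeStressDeLaFin} controls); and the concentration input is not an anti-concentration/Bernstein bound on the symbol $\widetilde q_\omega$ uniformly over phase space.

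The paper's actual argument is quite different and works at \emph{fixed} time $t$. First, Proposition \ref{Prop:DecompLag} decomposes the Laplace eigenfunction into $\mO(h^{(d-1)(\gamma-1)-\varepsilon})$ groups, each a superposition of $\mO(h^{-(d-1)\gamma-\varepsilon})$ Lagrangian states attached to unstable, $h^\gamma$-separated Lagrangians (this is done via a Darboux normal form $p\mapsto\bxi_1$, FIO conjugation, and a Fourier-series expansion — Lemma \ref{lem:ToyModel}). Second, propagating one group by $e^{-i\frac{t}{h}P_h^\delta}$ turns it into $\sum_j Z_j(x)$ where $Z_j(x)=s_j\hat b_{j,0}(t,x)e^{\frac{i}{h}\phi_{j,t,\delta}(x)}$; the phase corrections $\delta\int_0^t q_\omega(\zeta^{s,t}_{j,\delta}(x))\,ds$ for distinct $j$ involve disjoint sets of the $\omega_i$ (because the separated trajectories hit disjoint bumps $q_j$), so the $Z_j(x)$ are \emph{independent} with nearly vanishing mean \eqref{eq:QuasiCentree}. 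Marcinkiewicz–Zygmund plus Markov then give square-root cancellation $|\sum_j Z_j(x)|\lesssim h^{-\varepsilon}\|g_h\|_{L^2}$ with probability $1-\mO(h^\infty)$ at each point of a polynomially fine mesh, and a union bound plus semiclassical Sobolev estimates upgrade this to the $L^\infty$ bound. It is this trade-off — losing $|\mathcal I_h|^{1/2}$ by Cauchy–Schwarz over the groups but gaining $|J_h|^{1/2}$ by cancellation within each group — that produces $\Gamma'=\min(\Gamma,(d-1)\beta/2)$, and none of it appears in your proposal.
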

\begin{ex}
	As will be clear from the full version of our main result 
	Theorem \ref{th:Main} below, we have control over how $\gamma$ 
	depends on $\alpha$ and $\beta$ as in \eqref{eq:AlphBeta}. 
	See also Remark \ref{rem:exam1} below for a more precise 
	computation. 
	\par 
	So for instance, when $d=2$, we may take $\alpha = \frac{5}{7}$ and 
	$\beta = \frac{2}{7} - \varepsilon$ for an arbitrarily 
	small $\varepsilon>0$. This then yields (with a new arbitrarily 
	small but fixed $\varepsilon>0$)
	\begin{equation}
		\|\wt{\psi}_h\|_{L^\infty} 
		\leq C h^{-\frac{5}{14}- \varepsilon} \|\wt{\psi}_h\|_{L^2}.
	\end{equation}
\end{ex}
	
%
\subsection{More general random perturbations}
Our result holds when the operator $P_h$ in (\ref{eq:SchroedingerOp_n1}) 
is replaced with more general perturbations of $-h^2\Delta$: let us now 
present the precise assumptions we need. We set 
%
%
\begin{equation}\label{eq:SchroedingerOp}
	P_h^\delta := -\frac{h^2}{2} \Delta_g + \delta Q_\omega, 
\end{equation}
where $0\leq \delta = \delta(h) \ll 1$, and where $Q_\omega$ is a selfadjoint 
pseudo-differential operator given by the quantization of a random real-valued 
function $q_\omega : T^*X \longrightarrow \R$ belonging to the symbol 
class 
$S^{-\infty}_\beta(T^*X)$ for some $0<\beta< \frac{1}{2}$. 
More precisely, we 
construct the random symbol $q_\omega$ in the following way: let 
$J_h\subset \N$ be a set of indices of cardinality $|J_h| =\mO(h^{-M})$, for 
some $M>0$, and let $\{q_j\}_{j\in J_h}$ be a family of possibly $h$-dependent 
smooth compactly supported functions on $T^*X$. Let $\omega=\{\omega_j\}_{j\in J_h}$ 
be a sequence of independent and identically distributed (iid) random variables 
satisfying Hypothesis \ref{HypPot2a} above, and set
\begin{equation}\label{eq:randomSymbol}
	q_\omega(\rho) 
	:= 
	\sum_{j\in J_h} \omega_j \,q_j(\rho).
\end{equation}
We make the following additional assumptions on $\{q_j\}_{j\in J_h}$: 
%
%
\begin{hypo}[Hypotheses on the single-site potential]\label{HypPot}
	$\phantom{.}$
\begin{enumerate}[label=\roman*.]
	\item Each $q_j$ is compactly supported, with a support of diameter 
		$\mO(h^\beta)$ uniformly in $j\in J_h$. 
	\item There exists $C>0$, independent of $h$ and $j$, such that for all
	$\rho \in T^*X$, $\rho$ belongs to the support of at most $C$ functions $q_j$.
	\item For any $k\in \N$, there exists $C_k>0$ such that
	\begin{equation}\label{eq:PotentialDer}
		\|q_j\|_{C^k} \leq C_k h^{-\beta k} \quad \forall j\in J_h.
	\end{equation}
	\item Let $0<\mu_1\leq \mu_2$, and let $\varepsilon>0$. 
	There exists $c_0>0$, $\ora{h_0>0}$ such that, for any $T>0$,  \ora{any $0< h \leq h_0$} and any
	$\rho \in\mathcal{E}_{(\mu_1-h^{1-\varepsilon},\mu_2+h^{1-\varepsilon})}$, we have
	\begin{equation}\label{eq:PotentialEverywhere}
	 \sum_{j\in J_h}  \int_0^T  q_j \left(\Phi^t (\rho)\right) dt  
	 \geq c_0 T.
	\end{equation}
	Here, $\mathcal{E}_{(\mu_1,\mu_2)}:= \{(x,\xi)\in T^*X; |\xi|^2 
	\in (\mu_1,\mu_2)\}$ and $\Phi^t : T^*X \longrightarrow T^*X$ denotes the geodesic flow. 
	\end{enumerate}
\end{hypo}
\begin{ex}
	To build such a family of single-site potentials we may use 
	for instance the construction \eqref{eq:simpleSCP}. 
%
\end{ex} 
The symbol of \eqref{eq:SchroedingerOp} is given by 
\begin{equation}\label{eq:Hamiltonian}
	p_\delta(x,\xi) := p(x,\xi;\delta) := \frac{1}{2} |\xi|_x^2+ \delta q_\omega(x, \xi).
\end{equation}
This ($\delta$-dependent and therefore possibly $h$-dependent) Hamiltonian 
induces a Hamiltonian vector field $H_{p_{\delta}}$ which may be defined by the 
pointwise relation $H_{p_{\delta}} \lrcorner\, \sigma= -dp_{\delta}$, where $\sigma$ denotes 
the natural symplectic form on $T^*X$.  Alternatively, one can define $H_{p_{\delta}}$ 
in canonical symplectic coordinates by
\begin{equation}\label{eq:HamiltonianVF} 
	H_{p_{\delta}} 
	=\sum_{k=1}^d \frac{\partial p_{\delta}}{\partial \xi_k}
	\frac{\partial }{\partial x_k}
	-
	\frac{\partial p_{\delta}}{\partial x_k}
	\frac{\partial }{\partial \xi_k}.
\end{equation}
For every $\delta\geq 0$, we denote by 
\begin{equation}\label{eq:HamiltonianFlow} 
	\Phi^{t}_\delta:=\exp(tH_{p_{\delta}})
\end{equation}
the Hamiltonian flow on $T^*X$ generated by the vector field $H_p$.
\subsection{Conjugating with noisy quantum dynamics}\label{SubSec:Conjugate}
Just as in (\ref{eq:RandomOperator_n1}),
the operator whose eigenfunctions we will study in this paper is not 
$P_h^\delta$ itself, but rather the operator
\begin{equation}\label{eq:RandomOperator}
\widetilde{P}_h^\delta
:= e^{-i \frac{t}{h} P_h^\delta} (-h^2\Delta_g) e^{i \frac{t}{h} P_h^\delta},
\end{equation}
for some fixed $t>0$, independent of $h$. As in \eqref{eq:Egorov1}, 
Egorov's theorem \ref{app:prop.Egorov}, gives that  
\begin{equation*}
\widetilde{P}_h^\delta = -h^2\Delta_g + \delta \wt{Q}_\omega.
\end{equation*}
Here, $\wt{Q}_\omega \in \Psi^\comp_\beta(X)$ has full symbol 
\begin{equation}
	\widetilde{q}_\omega = q_\omega-(\Phi^t_\delta)^*q_\omega+\mO(h^{1-2\beta}) 
	\in S_\beta^{\comp}.
\end{equation}
The operator $\widetilde{P}_h^\delta$ may thus be seen as a perturbation of 
$-h^2\Delta_g$ by a random pseudo-differential operator $\delta \wt{Q}_\beta$ 
with a symbol in $\delta S^{\comp}_\beta(T^*X)$. 
\\
\par
The operator $\widetilde{P}_h^\delta$ has the same eigenvalues as $-h^2\Delta_g$, 
and $\psi_h$ is an eigenfunction of $-h^2\Delta_g$ if and only if 
$e^{-i \frac{t}{h} P_h^\delta} \psi_h$ 
is an eigenfunction of $\widetilde{P}_h^\delta$. Our aim will thus be to 
obtain improved bounds on 
\begin{equation*}
\| e^{-i \frac{t}{h} P_h^\delta} \psi_h\|_{L^\infty}
\end{equation*}
with high probability (under appropriate conditions on $\delta$ and $\beta$).
\begin{hypo}\label{Hyp:BetaDelta}
We suppose that there exists $0<\varepsilon_0< \frac{1}{4}$ and $h_0>0$ such that 
for all $0<h\leq h_0$
\begin{equation}\label{eq:CondBeta}
\delta h^{-2\beta - \varepsilon_0} \leq 1,
\end{equation}
\begin{equation}\label{eq:CondBetaDelta}
\delta^2  h^{\beta-2} \geq h^{-\varepsilon_0}.
\end{equation}
\end{hypo}

\begin{rem}
It is natural to consider the case $\delta = h^\alpha$ 
	with $2\beta \leq \alpha$. However, whenever convenient, we will stick with a 
	coupling constant $\delta$ for the sake of generality.
	\par
	Note that when $\delta = h^\alpha$, conditions (\ref{eq:CondBeta}) 
	and (\ref{eq:CondBetaDelta}) rewrite
	$$0<\beta < \min \left( \frac{\alpha}{2}, 2-2\alpha \right).$$
\end{rem}
\subsection{The main result}
Let us take $\Gamma>0$ such that, for $h$ small enough,
\begin{equation}\label{eq:CondGamma}
\begin{aligned}
\delta h^{-2\beta} \leq  h^\Gamma\\
h^{1-\Gamma} \leq \delta h^{\beta/2},
\end{aligned}
\end{equation}
which is possible thanks to Hypothesis \ref{Hyp:BetaDelta}. In particular, 
when $\delta = h^\alpha$, we may take $h\in (0,1]$ and 
\begin{equation}\label{eq:CondGamma.b}
\Gamma <  \min (1- \alpha - \frac{\beta}{2},  \alpha - 2\beta).
\end{equation}
We then set 
\begin{equation}\label{eq:DefGammaPrime}
\Gamma ' := \min \left( \Gamma, \frac{(d-1) \beta}{2} \right).
\end{equation}
\begin{thm}\label{th:Main}
Let $0<\mu_1<\mu_2$, let $t>0$, let $\wt{P}_h^\delta$ be of 
the form (\ref{eq:RandomOperator}). For $h$ small enough, 
the following holds with probability $\geq 1- \mO(h^\infty)$:
\par
If $\psi_h$ satisfies $\wt{P}_h^\delta \psi_h = E_h \psi_h$ with 
$E_h \in (\mu_1, \mu_2 )$, then, for any $\Gamma$ as in 
(\ref{eq:CondGamma}) and any $\varepsilon>0$, taking $\Gamma'$ as in
(\ref{eq:DefGammaPrime}), there exists $C>0$ such that 
\begin{equation}
\|\psi_h\|_{L^\infty} 
\leq C h^{\frac{1-d}{2} + \Gamma' - \varepsilon} \|\psi_h\|_{L^2}.
\end{equation}
\end{thm}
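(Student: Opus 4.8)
# Proof Proposal

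The plan is to establish the $L^\infty$ bound by combining a classical $TT^*$ / spectral-cluster argument with the key observation that the conjugated operator $\wt P_h^\delta = -h^2\Delta_g + \delta\wt Q_\omega$ has a symbol whose Hamiltonian flow, thanks to Hypothesis \ref{HypPot} (iv), is ``everywhere perturbed'' along geodesics. Concretely, I would start from the standard reproducing identity: if $\psi_h$ is a normalized eigenfunction of $\wt P_h^\delta$ with eigenvalue $E_h\in(\mu_1,\mu_2)$, then for any $\chi\in C_c^\infty(\R)$ with $\chi(0)=1$ one has $\psi_h = \chi\!\left(\tfrac{T}{h}(\wt P_h^\delta - E_h)\right)\psi_h$, and hence $|\psi_h(x)| \leq \| K_x \|_{L^2}$ where $K_x$ is the Schwartz kernel of $\chi\!\left(\tfrac{T}{h}(\wt P_h^\delta - E_h)\right)$ evaluated at $x$. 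By Fourier inversion this kernel is $\frac{h}{2\pi T}\int \hat\chi(s)\, e^{is h^{-1} E_h}\, \big(e^{-i\frac{s}{h}\wt P_h^\delta}\big)(x,\cdot)\, ds$, so everything reduces to a dispersive/stationary-phase analysis of the wave group $e^{-i\frac{s}{h}\wt P_h^\delta}$ for $|s|\leq T$, with $T = T(h) = c\log(1/h)$ chosen comparable to an Ehrenfest-type time so that the parametrix for $\wt P_h^\delta$ remains controllable (this is exactly the regime in which Bérard's estimate \eqref{eq:Berard} is proved).

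The heart of the improvement is then to exploit the random perturbation. I would write $e^{-i\frac{s}{h}\wt P_h^\delta}$ via a Duhamel expansion around the free propagator $e^{i\frac{s}{h}h^2\Delta_g}$, and track how the factor $\delta\wt Q_\omega$, integrated along the flow, contributes a decaying factor to the on-diagonal kernel. The point of Hypothesis \ref{HypPot}(iv) — that $\sum_j \int_0^T q_j(\Phi^t\rho)\,dt \geq c_0 T$ for every $\rho$ near the energy shell — is that \emph{along every geodesic} the perturbation is encountered a positive density of the time, so the accumulated phase/amplitude modulation from $\delta\wt q_\omega$ is, with overwhelming probability, genuinely present and not degenerate. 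Conditions \eqref{eq:CondBeta}–\eqref{eq:CondBetaDelta}, equivalently \eqref{eq:CondGamma}, are calibrated so that: (a) $\delta h^{-2\beta}\le h^\Gamma$ keeps the perturbed symbol in the right class and makes the Egorov/parametrix construction of \eqref{eq:Egorov1} valid up to logarithmic times with errors $\mO(h^\infty)$; and (b) $h^{1-\Gamma}\le \delta h^{\beta/2}$ guarantees the perturbation is strong enough (relative to the mesoscopic oscillation scale $h^\beta$) that the conjugation by $e^{i\frac th P_h^\delta}$ spreads an initially concentrated bump over a set of measure gaining a power $h^{\Gamma'}$, where the competition between $\Gamma$ and the geometric gain $(d-1)\beta/2$ in \eqref{eq:DefGammaPrime} reflects, respectively, the strength of the perturbation and the dimensional cost of dispersion off a shrinking tube. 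One then runs the usual second-microlocalization / dyadic decomposition of the geodesic sphere of radius $T$ into pieces of size $h^\beta$ and sums, the extra $\delta$-factor improving each block by the relevant power; a Borel–Cantelli / large-deviation argument over the iid family $(\omega_j)$, using $\Var(\omega_j)>0$ and $|J_h|=\mO(h^{-M})$, upgrades this to an estimate holding with probability $1-\mO(h^\infty)$, and the loss of $h^{-\varepsilon}$ absorbs the $\log(1/h)$ powers coming from $T\sim\log(1/h)$.

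The main obstacle I expect is the \emph{uniformity in the random parameter and the transfer from averaged bounds to bounds on genuine eigenfunctions}. The works of Eswarathasan–Toth and Canzani–Jakobson–Toth only bound $\erw|\psi_h(x)|$ or similar averaged quantities; here one must control $\|e^{-i\frac th P_h^\delta}\psi_h\|_{L^\infty}$ for the actual (random) eigenfunction. This forces one to prove the parametrix estimate for the wave group $e^{-i\frac sh \wt P_h^\delta}$ \emph{pathwise}, uniformly over $\omega$ in an event of probability $1-\mO(h^\infty)$, rather than in expectation — and to do so up to the Ehrenfest time, where the number of geodesic-tube interactions is exponential in $T$, so that the probabilistic cancellation must be strong enough to beat an $e^{CT}=h^{-C}$ combinatorial factor. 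Managing this is precisely where the techniques of \cite{IngVog} come in: one controls the noisy quantum evolution by a stationary-phase expansion whose remainder is estimated using the smoothness ($m\in C_c^2$) and non-degeneracy ($\Var>0$) of the distribution, and a careful bookkeeping of how the conjugation redistributes $L^2$ mass. A secondary technical point is verifying that the ``slightly exotic'' Egorov theorem of Appendix \ref{app:prop.Egorov} indeed produces $\wt q_\omega \in S^{\comp}_\beta$ with the claimed $\mO(h^{1-2\beta})$ remainder uniformly along the flow for times up to $c\log(1/h)$, which again hinges on \eqref{eq:CondBeta}; this is routine but needs care because $\beta<1/2$ is the borderline regime for the symbol calculus.
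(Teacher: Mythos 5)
Your proposal takes a route that is structurally different from the paper's, and I believe it cannot be made to work as stated. The bound $|\psi_h(x)|\leq \|K_x\|_{L^2}$ coming from the reproducing identity $\psi_h=\chi\big(\tfrac{T}{h}(\wt P_h^\delta-E_h)\big)\psi_h$ is a bound on the $L^2\to L^\infty$ norm of a smoothed spectral projector onto a window of width $\asymp h/T$, and $\|K_x\|_{L^2}^2$ is the on-diagonal kernel of $|\chi|^2\big(\tfrac{T}{h}(\wt P_h^\delta-E_h)\big)$, whose integral over $X$ is a trace controlled from below by the Weyl law: a window of width $h/T$ carries $\gtrsim h^{1-d}/T$ eigenvalues. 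Hence $\sup_x\|K_x\|_{L^2}\gtrsim h^{\frac{1-d}{2}}T^{-1/2}$ no matter how clever the parametrix, and with $T\sim\log(1/h)$ this method is capped at B\'erard's logarithmic gain \eqref{eq:Berard}. A polynomial gain $h^{\Gamma'}$ would require $T\gtrsim h^{-2\Gamma'}$, far beyond the Ehrenfest time, where no Duhamel/parametrix control is available; your own acknowledgement of the $e^{CT}$ combinatorial factor is precisely the symptom of this obstruction, and the randomness does not rescue it, because the trace lower bound is deterministic and independent of $\omega$. In short: no improvement of the spectral projector's operator norm on $h$-width windows is possible, so the gain must be extracted from the individual eigenfunction, not from the cluster.

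The paper's actual mechanism is entirely different and avoids long times. The propagation time $t$ in \eqref{eq:RandomOperator} is \emph{fixed}. One first decomposes the unperturbed eigenfunction $\psi_h$ of $-h^2\Delta_g$ (Proposition \ref{Prop:DecompLag}, via a quantum normal form $P_h\simeq hD_{x_1}$ and Fourier series in the straightened chart) into $\mO(h^{(d-1)(\beta-1)-\varepsilon})$ groups, each group being a superposition of $\mO(h^{-(d-1)\beta-\varepsilon})$ monochromatic Lagrangian states whose momenta are $h^{\beta-\varepsilon}$-separated. Each group is then propagated by $e^{-i\frac{t}{h}P_h^\delta}$ using the WKB analysis of \cite{IngVog}; at a fixed point $x$ the result is a sum $\sum_j Z_j(x)$ of random variables which are \emph{independent}, because the separation scale $h^{\beta-\varepsilon}$ exceeds the support scale $h^\beta$ of the single-site potentials, so distinct phases $\phi_{j,t,\delta}(x)$ sample disjoint subsets of $(\omega_i)$. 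Hypothesis \ref{HypPot}(iv) enters only to guarantee $\E[e^{\frac{i}{h}\phi_{j,t,\delta}}]=\mO(h^\Gamma)$ (equation \eqref{eq:QuasiCentree}), i.e.\ each summand is nearly centered; Marcinkiewicz--Zygmund plus Markov then give square-root cancellation $|\sum_j Z_j|\lesssim h^{-\varepsilon}\|g_h\|_{L^2}$ pointwise with probability $1-\mO(h^\infty)$, and Cauchy--Schwarz over the groups plus a union bound over a polynomial mesh yields $\Gamma'=\min(\Gamma,\tfrac{(d-1)\beta}{2})$. If you want to salvage your write-up, the essential missing ideas are the Lagrangian decomposition of the eigenfunction itself and the pointwise independence/concentration argument; the $TT^*$ and Duhamel machinery should be discarded.
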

\ora{This theorem deserves several remarks.}

\begin{rem}
Actually, the same result holds 
with the same proof for spectral clusters of the form
\begin{equation*}
	\psi_h = \sum_{\lambda_h\in [E_h, E_h + \mO(h)]}  \psi_{\lambda_h},
\end{equation*}
where $\wt{P}_h^\delta  \psi_{\lambda_h}= \lambda_h \psi_{\lambda_h}$ and 
$E_h\in (\mu_1, \mu_2)$.
\end{rem}
\begin{rem}\label{rem:exam1}
When $d=2$, we may take $\alpha = \frac{5}{7}$ and 
$\beta = \frac{2}{7} - \varepsilon$, leading to $\Gamma' = \frac{1}{7} 
- \varepsilon/2$ for an arbitrarily small $\varepsilon>0$. 
This then yields (with a new arbitrarily small but fixed $\varepsilon>0$)
\begin{equation}
	\|\psi_h\|_{L^\infty} 
	\leq C h^{-\frac{5}{14}- \varepsilon} \|\psi_h\|_{L^2}.
\end{equation}
When $d=3$, the optimal value of $\Gamma'$ which we can obtain from 
(\ref{eq:DefGammaPrime}) is $\Gamma'= \frac{2}{9} - \varepsilon$. 
\end{rem}
\begin{rem}
By interpolation, one can obtain bounds for the other $L^p$ norms. However, 
the value of $\Gamma'$ we can obtain (at least for $d=2$ or $d=3$) does not 
improve Sogge's estimates on the smallest $L^p$ norms, see 
\cite[Theorem 10.10]{Zw12}.
\end{rem}
\begin{rem}
In all the paper,  we make the assumption that $X$ is negatively curved, 
so as to use directly the technical results of \cite{IngVog}. However, 
it is likely that the present argument could be adapted without the 
negative curvature assumption, possibly with further restrictions on 
$\alpha$ and $\beta$, and on the time $t$ in \eqref{eq:RandomOperator}. 
This will be pursued elsewhere.
\end{rem}
\subsection{Organization of the paper}
In section \ref{sec:Class}, we will recall the notations and facts that we 
will use from classical dynamics. Section \ref{Sec:DecompoLag} will be 
devoted to the fact that an eigenfunction of the Laplacian can be decomposed 
as a sum of $\mO(h^{1-d-\varepsilon})$ Lagrangian states; although this fact 
is probably known to experts, we could not find a proof of it anywhere 
in the literature.  In Section 
\ref{sec:Proof}, we will recall a few facts from \cite{IngVog} about 
propagation of Lagrangian states by a noisy quantum evolution, and prove 
Theorem \ref{th:Main}. Finally, in Appendix \ref{sec:Appendix}, we will 
recall facts from semiclassical analysis and present a proof of Egorov's 
theorem for a noisy quantum evolution.
\subsection*{Acknowledgements.} Both authors were partially funded by the 
Agence Nationale de la Recherche, through the project ADYCT (ANR-20-CE40-0017). 
We are both grateful to the Institut de Recherche Math{\'e}matique Avanc{\'e}e 
and to the Laboratoire J. A. Dieudonn\'e, as parts of the paper were written 
during respective visits of the institutes. 
\section{Classical dynamics and Lagrangian states}\label{sec:Class}
In the sequel, $(X,g)$ will be a smooth compact manifold of negative sectional 
curvature without boundary. We denote by $\mathrm{dist}_X$ the geodesic distance 
on $X$. We denote by $T^*X$ the cotangent bundle of $X$, and by 
$\pi_X : T^*X \longrightarrow X$ the canonical projection.  
\par 
By $|\cdot|_x$, we denote the norm on $T^*_xX$ (respectively 
on $T_x X$ whenever convenient) induced by the metric $g$. We equip the 
cotangent bundle $T^*X$ with an arbitrary metric $g_0$ such that the induced geodesic 
distance $\mathrm{dist}_{T^*X}$ on $T^*X$ \ora{satisfies}
$\mathrm{dist}_{T^*X}(\rho_1,\rho_2)\geq c\dist_X(\pi_X (\rho_1),\pi_X (\rho_2))$ 
for some fixed constant $c>0$. This is for instance the case when we 
take $g_0$ to be the Sasaki metric on $T^*X$ induced by $g$.
\subsection{Geodesic flow}
We will denote by $\Phi^t : T^*X\longrightarrow T^*X$ the geodesic flow. 
For each $\lambda>0$,  we shall denote $S_{\lambda}^*X := \{(x,\xi)\in T^*X; 
|\xi|^2=\lambda\}$, and write $\Phi_\lambda^t : S_\lambda^*X \longrightarrow 
S_\lambda^*X$ for the geodesic flow acting on $S_\lambda^*X$. For each 
$\lambda>0$, the geodesic flow $\Phi_\lambda^t$ is Anosov, so for any 
$\rho\in S_\lambda^*X$  we may decompose the tangent spaces 
$T_\rho S_\lambda^*X$ into \emph{unstable}, \emph{neutral} and 
\emph{stable directions}
\begin{equation*}
	T_\rho S_\lambda^*X=E_\rho^+\oplus E_\rho^0 \oplus E_\rho^-.
\end{equation*} 
We refer the reader to \cite[Section 4.2]{IngVog} for the precise definition 
and properties of the spaces $E_\rho^\bullet$, and to \cite{Ebe} for a proof 
of the Anosov property.
\\
\par
%
%
%
We recall that Gronwall's lemma implies that for any $0< \mu_1 < \mu_2$, 
there exists $C_0>0$ such that,
\begin{equation}\label{eq:ExpRate}
\forall \rho, \rho' \in \mathcal{E}_{(\mu_1, \mu_2)}, ~
\forall t\in \R, 
\quad 
 \mathrm{dist}_{T^*X} 
\left( 
	\Phi^t(\rho), 
	\Phi^t(\rho') 
\right) 
\leq C_0 \e^{C_0 |t|}
\mathrm{dist}_{T^*X}(\rho, \rho').
\end{equation}
\subsection{Universal cover}
We will denote the universal cover of $X$ by $\widetilde{\pi}:\widetilde{X}\to X$. 
Since $X$ is a connected Riemannian manifold of negative sectional curvature, 
$\widetilde{X}$ is a simply-connected manifold of negative sectional 
curvature.  We equip $\wt{X}$ and $T^*\wt{X}$ with the lifted Riemannian metrics 
$\wt{g}$ and $\wt{g}_0$, respectively. Equation (\ref{eq:ExpRate}) then holds 
(with the same constant $C_0>0$) when $X$ is replaced by $\wit{X}$.
\par
Note that since the manifold $X$ is compact, there exists $c_0>0$ such that
\begin{equation}\label{eq:DistCover}
\wit{x}, \wit{x}'\in \wit{X} \text{ with } \wit{\pi}(\wit{x}) 
	= \wit{\pi}(\wit{x}')  =x,  \left(\wit{x} \neq \wit{x}'\right) 
	\Longrightarrow \mathrm{dist}_{\wit{X}} (x,x') >c_0.
\end{equation}
\subsection{$C^L$ norms}
In this work,  $X$ is a compact Riemannian manifold without boundary, 
and to define the $C^L$ norms, we fix a finite collection of coordinate 
charts $\{\varphi_\iota\}$ defined in open sets $(\cU_\iota)$ covering $X$, 
with a subordinate partition of unity $1=\sum_\iota\chi_\iota$. We then define 
the $C^L$ norm for functions on $X$ by 
\begin{equation}\label{eq:defCL}
	\| u \|_{C^L(X)} = \max_{\iota \in I}
	\|(\varphi_\iota^{-1})^*\chi_\iota u \|_{C^L(\R^d)}.
\end{equation}
This norm is not intrinsically defined, but taking different coordinate 
patches and cut-off functions in \eqref{eq:defCL} yields an equivalent norm. 
\subsection{Lagrangian states}
A \emph{Lagrangian state} on $X$ is a family of functions $f_h\in C^\infty(X)$ 
indexed by $h\in]0,1]$, of the form
\begin{equation}\label{eq:LagState}
f_h(x)=a(x;h)\e^{i\phi(x)/h},
\end{equation}
where $\phi\in C^\infty(O)$ for some connected and simply connected  open subset 
$O\subset X$ and $a(\cdot;h)\in C^\infty_c(O)$. To a Lagrangian state we can 
associate a \emph{Lagrangian manifold}
\begin{equation*}
	\Lambda_\phi:= \{ (x, d_x \phi) ; x\in O\}\ \subset T^*X.
\end{equation*}
A Lagrangian state is called 
$\lambda$-\emph{monochromatic} if $\Lambda_\phi\subset S^*_\lambda X:= 
\{(x,\xi)\in T^*X; |\xi|_x^2=\lambda\}$, i.e. if 
\begin{equation}\label{eq:LagState_monochrom}
	|d_x \phi|^2 = \lambda \quad  \text{for all } x\in O.
\end{equation}
\begin{definition}
For every $\eta>0$, we say that a Lagrangian manifold 
$\Lambda_{\phi}\subset S_\lambda^*X$ is $\eta$-unstable if, 
for every $\rho\in \Lambda_\phi$ and for every $v\in T_\rho \Lambda_\phi$, 
writing $v= (v_+, v_-, v_0)\in E_\rho^+\oplus E_\rho^-\oplus E_\rho^0$, 
we have 
\begin{equation*}
	|(0,v_-,0)|_{\rho} \leq \eta |v|_\rho.
\end{equation*} 
\end{definition}
\par
Recall that the intrinsic distance $\mathrm{dist}_{\Lambda}(\rho_1, \rho_2)$ 
between two points $\rho_1, \rho_2\in \Lambda$ is the minimal length of 
curves in $\Lambda$ joining $\rho_1$ and $\rho_2$, the length being computed 
using an arbitrary metric on $T^*X$.  We define the 
\emph{distortion} of $\Lambda$ as
\begin{equation}\label{eq:DefDistorsion}
\mathrm{distortion}(\Lambda) 
:= \sup\limits_{\rho_1, \rho_2\in \Lambda} 
	\frac{\mathrm{dist}_{\Lambda}(\rho_1,\rho_2)}{\mathrm{dist}_{T^*X}(\rho_1, \rho_2)}.
\end{equation}
\section{Decomposition of eigenmodes as sums of Lagrangian states}
\label{Sec:DecompoLag}
In the sequel, we will consider linear combinations of Lagrangian states 
that are associated to unstable Lagrangian manifolds which are far away 
from each other.
\begin{definition}
Let $U\subset X$ be an open set, let \ora{ $0<\mu_1 < \mu_2$ be two number which may depend on $h$, but are bounded from above and below independently of $h$}, and 
let $\gamma, \eta, D, \varepsilon>0$. An \emph{$\varepsilon$-sharp, 
$D$-bounded, $h^\gamma$-separated superposition of $\eta$-unstable 
Lagrangian states of energy $(\mu_1,\mu_2)$ in $U$} is an 
$h$-dependent function $g_h$ of the form 
\begin{equation}\label{eqn:TesTropClasseMaxime}
	g_h:= \sum_{j\in J_h} s_j f_j,
\end{equation}
such that
\begin{itemize}
\item  $(s_j)_{j\in J_h}$ is a family of complex numbers indexed by a 
finite ($h$-dependent) set $J_h$ of cardinality
\begin{equation}\label{eq:CardJh}
|J_h| = \mO(h^{-(d-1)\gamma- \varepsilon});
\end{equation}
\item each $f_j$ is a function on $U$ of the form $f_j(x) = a_j(x;h) 
e^{\frac{i}{h} \phi_j(x)}$, where $a_j(\cdot ; h)$ is smooth, supported 
in $U$,  has all its derivatives bounded independently of $h$ and $j$, and 
satisfies $\|a_j (\cdot ; h)\|_{C^0}\leq 1$;
\item there exists $h_0>0$ such that, for every $0<h\leq h_0$, every 
$j,j'\in J_h$ and every $x\in \spt~a_j \cap \spt~a_{j'}$, we have 
$|\partial_x \phi_j(x) - \partial_x \phi_{j'}(x)|> h^{\gamma}$;
\item if we write $\Lambda_j := \{ (x, d_x \phi_j) ; x\in  U\}$, 
then $\Lambda_j$ is $\eta$-unstable, $\lambda_j$-monochromatic for 
some $\lambda_j \in \left(\mu_1,\mu_2\right)$, it has 
distortion $\leq D$,  and we have $\|\phi_j\|_{C^3} \leq D$. Furthermore, all the derivatives of $\phi_j$ are bounded independently of $h$ and $j$.
\end{itemize}
\end{definition}
\par
The following proposition allows us to decompose eigenfunctions of 
$-h^2\Delta_g$ using $\mO(h^{1-d-\varepsilon})$ many Lagrangian states.
\begin{prop}\label{Prop:DecompLag}
Let $0<\mu_1<\mu_2$. There exists $D>0$ such that, for any 
$\varepsilon, \eta>0$,  $0<\gamma<1$, and any $L^2$-normalized 
family of functions $\psi_h$ such that $-h^2\Delta_g \psi_h = E_h \psi_h$ with 
$E_h \in (\mu_1,\mu_2)$, the following holds. We may write
\begin{equation*}
	\psi_h = \sum_{i\in \mathcal{I}_h} g_{i,h} + \mO_{C^\infty}(h^\infty),
\end{equation*}
where each $g_{i,h}$ is an $\varepsilon$-sharp, $D$-bounded,  
$h^\gamma$-separated superposition of $\eta$-unstable Lagrangian 
states of energy $\left(\mu_1 - \ora{h^{1-\varepsilon}}, \mu_2 + 
\ora{h^{1-\varepsilon}} \right)$ in a set $U_i\subset X$ of diameter 
$\leq \varepsilon$, and where $\mathcal{I}_h$ is a finite set of cardinality  
$h^{(d-1)(\gamma-1)- \varepsilon}$.  Additionally, all derivatives of 
the amplitudes and phases of the Lagrangian states are independent of 
$i$, as well as of $j$ and $h$ as in \eqref{eqn:TesTropClasseMaxime}. 
\par
Furthermore, if $\gamma>\frac{1}{2}$, we have
\begin{equation}\label{eq:Orthogonalite}
 \sum_{i\in \mathcal{I}_h} \|g_{i,h}\|_{L^2}^2 \leq \mO(1) \|\psi_h\|_{L^2}^2.
\end{equation}
\end{prop}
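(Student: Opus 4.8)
The plan is to reduce the statement to a purely local one by microlocalizing $\psi_h$ to a thin neighbourhood of the energy shell and cutting in physical space, then to produce the Lagrangian decomposition of each local piece by a WKB / Fourier integral operator analysis, using the negative curvature only at the very end to upgrade the resulting Lagrangian manifolds to $\eta$-unstable ones with controlled distortion. First, I would fix $\varepsilon>0$ and choose an $h$-dependent cutoff $\chi_h\in C_c^\infty(\R)$ equal to $1$ on $[\mu_1,\mu_2]$, supported in $(\mu_1-h^{1-\varepsilon},\mu_2+h^{1-\varepsilon})$, with $\|\chi_h^{(k)}\|_\infty=\mO(h^{-(1-\varepsilon)k})$. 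Since $\psi_h$ is an exact eigenfunction with $E_h\in(\mu_1,\mu_2)$, one has $\psi_h=\chi_h(-h^2\Delta_g)\psi_h$ with no error at all, and by the Helffer--Sjöstrand functional calculus $\chi_h(-h^2\Delta_g)=\Op_h(\wt\chi_h)$ with $\wt\chi_h$ supported in $\mathcal E_{(\mu_1-h^{1-\varepsilon},\mu_2+h^{1-\varepsilon})}$ and lying in the exotic class $S_{1-\varepsilon}$ (harmless since $1-\varepsilon<1$). This is precisely where the slightly enlarged energy window in the statement comes from, and the same argument applies verbatim to an $\mO(h)$-spectral cluster.

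Next, let $1=\sum_a\theta_a$ be a smooth partition of unity subordinate to a finite cover of $X$ by balls $U_a$ of diameter $\le\varepsilon$, so $\psi_h=\sum_a\theta_a\psi_h$; it then suffices to decompose each $\theta_a\psi_h$, which is microlocalized in $T^*U_a\cap\mathcal E_{(\mu_1-h^{1-\varepsilon},\mu_2+h^{1-\varepsilon})}$. Inside $T^*U_a\cap\mathcal E$ I would introduce a two-scale microlocal partition of unity: a cut at scale $h^\gamma$ in the $(d-1)$ directions transverse to the geodesic flow, which produces the inner index set $J_h$ of cardinality $\mO(h^{-(d-1)\gamma-\varepsilon})$ together with the $h^\gamma$-separation of momenta, and a complementary cut at scale $h^{1-\gamma}$ in the conjugate transversal-position variables, which produces the outer index $\mathcal I_h$ of cardinality $\mO(h^{-(d-1)(1-\gamma)-\varepsilon})$; the extra $h^{-\varepsilon}$'s absorb the logarithmic overlaps needed to keep the cutoffs smooth. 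On each such cell the corresponding piece of $\mathcal E$ is, after conjugation by an elliptic Fourier integral operator, a graph $\xi=\xi(x)$ over a ball in $\R^d$; solving the eikonal and transport equations then exhibits the cell of $\theta_a\psi_h$ as a single Lagrangian state $a_j(x;h)e^{i\phi_j(x)/h}$ modulo $\mO(h^\infty)$, with $|d_x\phi_j|^2=\lambda_j\in(\mu_1-h^{1-\varepsilon},\mu_2+h^{1-\varepsilon})$ and $a_j$ supported in $U_a$. Undoing the Fourier integral operators and summing over cells yields the decomposition, and since everything is built from a fixed finite family of charts and cutoffs, the derivatives of the amplitudes and phases are uniform in $i$, $j$, $h$.

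To obtain $\eta$-unstability and the distortion bound $\le D$, I would fix $T=T(\eta)$ large enough that $\Phi^T$ contracts the stable directions by a factor $\le\eta$ (Anosov property, with the rate from \eqref{eq:ExpRate}), and arrange the WKB phases above so that the Lagrangian manifolds $\Lambda_{\phi_j}$ are the images under $\Phi^T$ of manifolds transverse to the stable foliation; equivalently, one replaces $\theta_a\psi_h$ by the time-$T$ propagation of a suitable cutoff of $\psi_h$ and invokes the long-time Lagrangian propagation estimates of \cite{IngVog}. With $T$ fixed, the distortions and the $C^3$-norms of the $\phi_j$ stay $\mO(1)$, which fixes $D$. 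Alternatively, on the universal cover one can directly expand $\theta_a\psi_h$ over Busemann-type phases $\sqrt\lambda\,B_\nu$, whose Lagrangian manifolds are the conormals of horospheres and are exactly $0$-unstable. I expect this step to be the main obstacle: controlling the caustic structure so that each small cell really is a single Lagrangian state (not a fattened Lagrangian, nor an uncontrolled superposition of many), making the two scales $h^\gamma$ and $h^{1-\gamma}$ interlock so that the cardinalities come out exactly as claimed, and keeping every estimate uniform in the indices and in $h$.

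Finally, for the orthogonality \eqref{eq:Orthogonalite} when $\gamma>\tfrac12$, I would expand $\|\psi_h\|_{L^2}^2=\sum_{i,i'}\langle g_{i,h},g_{i',h}\rangle+\mO(h^\infty)$ and show the off-diagonal sum is $\le\tfrac12\sum_i\|g_{i,h}\|_{L^2}^2+\mO(h^\infty)$. Two pieces attached to distinct cells are separated either in transversal position (disjoint cells of width $h^{1-\gamma}$) or in momentum; since $\gamma>\tfrac12$ we have $h^{1-\gamma}<h^{1/2}$, so the relevant exotic symbol calculus (respectively non-stationary phase) still closes and yields a rapidly decaying bound on $\langle g_{i,h},g_{i',h}\rangle$ in terms of the combinatorial distance between cells, after which a Schur test against the $\mO(1)$-overlap cell structure gives the claim. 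This is the only place where $\gamma>\tfrac12$ is genuinely used, and for $\gamma\le\tfrac12$ the symbols involved fall into $S_{\ge 1/2}$ and this part of the argument breaks down.
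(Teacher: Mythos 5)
Your overall scaffolding (microlocalization to an $h^{1-\varepsilon}$-thick energy window, a spatial partition into $\varepsilon$-balls, a momentum decomposition at scale $h^\gamma$ yielding the counts $|J_h|=\mO(h^{-(d-1)\gamma-\varepsilon})$ and $|\mathcal{I}_h|=\mO(h^{(d-1)(\gamma-1)-\varepsilon})$, and non-stationary phase for \eqref{eq:Orthogonalite} using $h^\gamma\gg h^{1/2}$) matches the paper. But the core of your argument has a genuine gap, which you yourself flag as ``the main obstacle'': the claim that each phase-space cell of size $h^\gamma$ in transverse momentum and $h^{1-\gamma}$ in transverse position carries a \emph{single} Lagrangian state obtained by ``solving the eikonal and transport equations''. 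This cannot work as stated. First, a piece of an exact eigenfunction microlocalized to such a cell is a squeezed wavepacket; writing it as $a(x;h)e^{i\phi(x)/h}$ forces the amplitude to be supported at scale $h^{1-\gamma}$ and hence to have derivatives of size $h^{-(1-\gamma)}$ per differentiation, violating the requirement that the $a_j$ have all derivatives bounded independently of $h$ and $j$ (and the uniform $D$-bounds on distortion and $\|\phi_j\|_{C^3}$). Second, eikonal/transport equations construct approximate solutions; they do not decompose a given exact eigenfunction. The mechanism the paper uses instead is a quantum normal form: Darboux's theorem conjugates $p-\mu_{12}$ to $\bxi_1$, an elliptic Fourier integral operator conjugates $-h^2\Delta_g-\mu_{12}$ to $hD_{x_1}$ microlocally, and then the equation $(hD_{x_1}+\mu_{12}-E_h)\bpsi_h=\mO_{L^2}(h^\infty)$ forces the Fourier series of the localized eigenfunction on a torus to retain only $\mO(h^{1-d-\varepsilon})$ modes (Lemma \ref{lem:ToyModel}); each surviving mode $e^{i\sigma n\cdot\bx}$ is a flat Lagrangian state with $O(1)$ amplitude, and pulling back by the FIO produces the manifolds $\Lambda_{k,n}$. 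The grouping into $h^\gamma$-separated families is then purely arithmetic (residue classes modulo $N_h\sim h^{\gamma-1-\varepsilon}$ in the frequency lattice). Without this normal-form/Fourier-series step the decomposition does not exist in the required form.

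Two further points. Your device for $\eta$-unstability --- propagating a cutoff of $\psi_h$ for a long time $T(\eta)$ --- is circular for exact eigenfunctions: by Egorov, $e^{-i\frac{T}{h}P_h}A\psi_h=e^{-i\frac{T}{h}E_h}A_T\psi_h$ is again a microlocal cutoff of $\psi_h$, so nothing is gained; and Busemann phases on a variable-curvature manifold are not smooth enough to satisfy $\|\phi_j\|_{C^3}\le D$ with all higher derivatives bounded. The paper obtains unstability for free at the classical level, by choosing the Darboux chart so that $d_{\rho_0}\kappa(E^-_{\rho_0})=\mathrm{span}(e_2,\dots,e_d)$; the constant-momentum planes then pull back to $\varepsilon_1$-unstable Lagrangians by H\"older continuity of $\rho\mapsto E^-_\rho$. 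Finally, for \eqref{eq:Orthogonalite}: distinct groups $g_{i,h}$, $g_{i',h}$ in the correct decomposition contain modes whose momenta differ by as little as $\mO(h)$, so the groups are \emph{not} pairwise almost orthogonal and a Schur test across groups fails; the inequality is instead proved by computing each $\|g_{i,h}\|_{L^2}^2$ as a diagonal sum via non-stationary phase \emph{within} the group and then invoking Parseval for the full Fourier series.
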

Before we present the proof of this result in Section \ref{sec:ProofOfProp} 
below we discuss a local model.
\subsection{A local model}
Before proving Proposition \ref{Prop:DecompLag}, we shall consider a local 
model in $\R^d$. To distinguish functions and operators on manifolds from 
the one in local charts, we will denote the objects in $\R^d$ by bold letters.
\par
In this section, we will use the following notations for projections 
$\R^{2d} \longrightarrow \R^d$: 
\begin{equation*}
	\pi_x(\bx_1,..., \bx_d, \bxi_1,..., \bxi_d) 
= (\bx_1,...,\bx_d), \text{ and }\pi_\xi(\bx_1,..., \bx_d, \bxi_1,..., \bxi_d) 
= (\bxi_1,...,\bxi_d).
\end{equation*}

\ora{In the sequel, we will always write $D_x$ for $\frac{1}{i} \partial_x$.}
\begin{lem}\label{lem:ToyModel}
Let $U\subset \R^{2d}$ be a bounded open set containing $(0,0)$, with 
$\overline{U}\subset (-\sigma \pi, \sigma \pi)^d \times (-\varrho, \varrho)^d$, 
for some $\sigma, \varrho >0$. Let $\bpsi_h$ be a family of $L^2$ 
functions on $\R^d$ with $\|\bpsi_h\|_{L^2}=\mO(1)$\ora{, and let $\mu_h$ be a family of real numbers, bounded independently of $h$}. Suppose that
\begin{equation*}
	\bA (h D_{\bx_1} \ora{+ \mu_h}) \bpsi_h = \mO_{L^2}(h^\infty)
\end{equation*}
whenever $\bA\in \Psi_h^{\comp}(\R^d)$ is microsupported in $U$. 
Let $\bB\in \Psi_h^{\comp}(\R^d)$ be  microsupported in $U$. 
\par
There exists 
$ C>0$ and a sequence  $(a_n)\in \ell^2(\Z^d)$ with $\|a_n\|_{\ell^2}= 
C\| \bB \bpsi_h\|_{L^2}+\mO(h^\infty)$ such that the following holds: 
\par
For any $\varepsilon>0$, we have
\begin{equation*}
	 \bB \bpsi_h(\bx) 
	= \frac{\sigma^{d/2}}{(2\pi)^{d/2}}\sum_{\underset{|n_2|+...+|n_d| < \varrho/(\sigma h)}{n\in \Z^{d} 
		; |n_1 \ora{- h^{-1} \mu_h}| < h^{-\varepsilon}}} a_n e^{ i\sigma n\cdot \bx} 
		+ \mO_{C^\infty((-\sigma \pi, \sigma \pi)^d)}(h^\infty), 
		\quad \bx\in (-\sigma \pi, \sigma \pi)^d.
\end{equation*}
In particular the sum above contains $\mO(h^{-d +1 - \varepsilon})$ 
terms.
\end{lem}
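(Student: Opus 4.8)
## Proof Proposal for Lemma \ref{lem:ToyModel}

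The plan is to reduce the statement to an elementary Fourier-series expansion on the torus by exploiting the microlocal support condition. First I would note that since $\bB$ is microsupported in $U \subset (-\sigma\pi,\sigma\pi)^d\times(-\varrho,\varrho)^d$, the function $\bB\bpsi_h$ is (up to $\mO_{C^\infty}(h^\infty)$ errors) compactly supported in $\bx\in(-\sigma\pi,\sigma\pi)^d$; hence we may periodize it and view $\bB\bpsi_h$ as a function on the torus $\T^d_\sigma := (\R/2\sigma\pi\Z)^d$, whose Fourier coefficients with respect to the basis $\{(2\sigma\pi)^{-d/2}e^{i\sigma n\cdot\bx}\}_{n\in\Z^d}$ we call $\widetilde{a}_n$. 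Setting $a_n := (2\pi)^{d/2}\sigma^{-d/2}\cdot(2\sigma\pi)^{-d/2}\widetilde a_n\cdot(\text{normalization})$ appropriately, Parseval on the torus gives $\|a_n\|_{\ell^2} = C\|\bB\bpsi_h\|_{L^2(\R^d)} + \mO(h^\infty)$ for a suitable constant $C$ depending only on $\sigma$ and $d$ (the $\mO(h^\infty)$ accounting for the periodization tails). This already produces the sequence $(a_n)$ and the claimed $\ell^2$-norm identity.

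The substance of the lemma is showing that the Fourier coefficients $a_n$ are negligible, to order $h^\infty$, unless $n$ lies in the prescribed index set $\{|n_1 - h^{-1}\mu_h| < h^{-\varepsilon},\ |n_2|+\dots+|n_d| < \varrho/(\sigma h)\}$. The constraint on $(n_2,\dots,n_d)$ comes from the $\xi$-support of $\bB$: since $\bB\bpsi_h$ is microlocalized in $\{|\bxi_k| < \varrho\}$ for $k\ge 2$ (and also $k=1$, but we need a sharper statement there), a standard non-stationary phase / integration-by-parts argument against the oscillatory test function $e^{-i\sigma n\cdot\bx}$ shows that $a_n = \mO(h^\infty)$ once $\sigma|n_k| \ge \varrho$ for some $k\ge 2$, i.e. $|n_k| > \varrho/(\sigma h)$ is already excluded, but the sharper $\ell^1$-type bound $|n_2|+\dots+|n_d| < \varrho/(\sigma h)$ follows by microlocalizing more carefully, choosing a symbol supported in $U$ and using that away from $U$ we may insert such a cutoff at the cost of $\mO(h^\infty)$. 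The crucial constraint is the one on $n_1$: here I would use the hypothesis $\bA(hD_{\bx_1} + \mu_h)\bpsi_h = \mO_{L^2}(h^\infty)$ for all $\bA$ microsupported in $U$. Choosing $\bA$ with symbol $\equiv 1$ near $\supp\bB$, we get $(hD_{\bx_1}+\mu_h)\bB\bpsi_h = \mO_{L^2}(h^\infty) + (\text{commutator terms})$, and the commutator $[\bB, hD_{\bx_1}]$ is $\mO(h)$ but — more importantly — also microsupported in $U$, so it can be reabsorbed after a bootstrap. Periodizing and taking the $n$-th Fourier coefficient, the operator $hD_{\bx_1}$ acts as multiplication by $h\sigma n_1$, so $(h\sigma n_1 + \mu_h)\widetilde a_n = \mO(h^\infty)$ uniformly in $n$; hence $\widetilde a_n = \mO(h^\infty)$ whenever $|h\sigma n_1 + \mu_h| \gtrsim h^{1-\varepsilon}$, i.e. whenever $|n_1 + (h\sigma)^{-1}\mu_h| \gtrsim h^{-\varepsilon}$. (A minor sign/scaling bookkeeping adjusts this to the stated $|n_1 - h^{-1}\mu_h| < h^{-\varepsilon}$ form; one should be careful whether $hD_{\bx_1}$ or $hD_{\bx_1}$ after the $\sigma$-rescaling of the torus is meant, but this is routine.)

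I expect the main obstacle to be making the periodization rigorous while keeping uniform control of the $\mO(h^\infty)$ errors in $C^\infty$: one must check that cutting off $\bB\bpsi_h$ to a slightly smaller box, periodizing, and then restoring the cutoff only introduces errors that are $\mO_{C^\infty}(h^\infty)$ — this uses that $\bB\bpsi_h$ is, modulo $\mO(h^\infty)$, supported strictly inside $(-\sigma\pi,\sigma\pi)^d$ because $\overline U$ is a compact subset of the open box, together with the fact that a function microlocalized in a bounded frequency box extends to a smooth function with all derivatives $\mO(h^{-N})$ — actually $\mO(1)$ after the standard rescaling — so that the sum over the excluded $n$'s (which has at most polynomially many relevant terms per dyadic shell) still sums to $\mO(h^\infty)$ in every $C^k$ norm. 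The counting statement at the end is immediate: the index set has $\mO(h^{-\varepsilon})$ choices for $n_1$ and $\mO((h^{-1})^{d-1}) = \mO(h^{1-d})$ choices for $(n_2,\dots,n_d)$, giving $\mO(h^{-d+1-\varepsilon})$ terms in total.
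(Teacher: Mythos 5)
Your proposal is correct and follows essentially the same route as the paper: cut off in $\bx$ and insert a frequency cutoff $\Op_h^{\mathrm{w}}(\chi_0(\bxi))$, expand in Fourier series on the box with Parseval giving the $\ell^2$ identity, use non-stationary phase to kill the coefficients with $h\sigma|n|\gtrsim\varrho$, and use the hypothesis together with an iterated commutator argument (your ``bootstrap'' is exactly the paper's inductive expansion of $(hD_{\bx_1})^p\chi\Op_h^{\mathrm{w}}(\chi_1)\bB$ into terms $h^j\bB^{(j,p)}(hD_{\bx_1})^{p-j}$) to localize $n_1$. The only cosmetic difference is that the paper first gauges away $\mu_h$ by multiplying by a plane wave, whereas you carry it through; your remark that the stated $n_1$-window involves routine sign/scaling bookkeeping is accurate.
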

\begin{proof}
\ora{First of all, note that we can always reduce to the case $\mu_h=0$ by multiplying $\psi_h$ by $e^{-\frac{i}{h} \mu_h \bx_1}$. We will thus suppose that $\mu_h=0$ in the rest of the proof.}

By assumption we have that $0\in \pi_x(U) \Subset (- \sigma \pi,  \sigma \pi)^d
=:\cT$.
Let us take $\chi \in C_c^\infty(\cT)$ with 
$\chi \equiv 1$ on $\pi_x(U)$. Let $\bB\in \Psi_h^{comp}(\R^d)$ be 
microsupported in $U$. We then have $ \bB \bpsi_h = \chi \bB \bpsi_h + 
\mO_{\cS(\R^d)}(h^\infty)$.  Furthermore, let $\chi_0\in C_c^\infty(\R^d ; [0+\infty[)$ 
with $\chi_0$ equal to $1$ on $\pi_\xi(U)$, and $\chi_0$ supported inside 
$(-\varrho, \varrho)^d$.  Writing $\chi_1(\bx,\bxi):= \chi_0(\bxi)$, we then 
have thanks to (\ref{eq:MicrolocallyId})
\begin{equation*}
	\bB \bpsi_h = \chi \mathrm{Op}_h^{\mathrm{w}} (\chi_1)  \bB \bpsi_h + 
	\mO_{\cS(\R^d)}(h^\infty).
\end{equation*}
Notice that $\mathrm{Op}_h^{\mathrm{w}} (\chi_1)$ is selfadjoint since we work with the 
Weyl quantization.
\par
Now, since  $\chi \mathrm{Op}_h^{\mathrm{w}} (\chi_1)  \bB \bpsi_h$ is a smooth compactly 
supported function in $\cT$, we may expand it in a 
Fourier series. Write $e_n(\bx) = (\sigma/2\pi)^{d/2}e^{i \sigma n\cdot \bx}$, 
$n\in\Z^d$, and 
$a_n := \langle  \chi \mathrm{Op}_h^{\mathrm{w}} (\chi_1) \bB \bpsi_h, e_n \rangle = 
\langle  \chi  \bB \bpsi_h, e_n \rangle + \mO(h^\infty)$.  Here, 
$\langle \cdot , \cdot \rangle$ denotes the standard sesquilinear 
$L^2$ scalar product. Then, we have in $\cT$
\begin{align*}
\bB \bpsi_h &=  \chi \mathrm{Op}_h^{\mathrm{w}} (\chi_1)  \bB \bpsi_h + \mO_{C^\infty(\cT)}(h^\infty)\\
&= \sum_{n\in \Z^d} a_n e_n + \mO_{C^\infty(\cT)}(h^\infty)\\
&=  \sum_{n\in \Z^d} \langle \bB \bpsi_h, \mathrm{Op}_h^{\mathrm{w}} (\chi_1) 
	\chi  e_n \rangle e_n + \mO_{C^\infty(\cT)}(h^\infty).
\end{align*}
Note that, by Parceval's identity, we have $\|a_n\|_{\ell^2} = C_1 
\|\chi \mathrm{Op}_h^{\mathrm{w}} (\chi_1) \bB_h \bpsi_h\|_{L^2} = \|\bB_h \bpsi_h\|_{L^2} 
+ \mO(h^\infty)$ for some $C_1>0$.  An application of the second part of the 
non-stationary phase Lemma \ref{Lem:NonStat} implies that
%
%
\begin{equation*}
	\sum_{n\in \Z^d ; |n|\geq \varrho/(\sigma h)} 
		\left| \langle  \bB \bpsi_h, \mathrm{Op}_h^{\mathrm{w}} (\chi_1) \chi  e_n \rangle \right| 
		= \mO(h^\infty).
\end{equation*}
Therefore, we have that in $\cT$
%
\begin{equation*}
\bB \bpsi_h = \sum_{n\in \Z^d ; |n| < \varrho/(\sigma h)} a_n e_n 
	+ \mO_{C^\infty(\cT)}(h^\infty).
\end{equation*}
By integration by parts, we find that 
\begin{align*}
\langle\chi  \bB  \bpsi_h, e_n \rangle 
	= \frac{1}{(h n_1)^p} \langle  (h \ora{D_{\bx_1}})^p \chi \bB \bpsi_h, e_n \rangle.
\end{align*}
Using a commutator argument, we see that for any pseudo-differential operator $\bB^1\in \Psi_h^{comp}(\R^d)$ 
microsupported inside $U$, we may find a pseudo-differential operator  
$\bB^2\in \Psi_h^{\comp}(\R^d)$ microsupported inside $U$ such that
$(h \ora{D_{\bx_1}})  \bB^1 =  \bB^1 (h \ora{D_{\bx_1}}) + h \bB^2$. 
We may thus prove inductively that there exist pseudodifferential operators 
$\bB^{(j,p)}\in \Psi_h^{\comp}(\R^{2d})$ microsupported in $U$ such that 
\begin{equation*}
	(h \ora{D_{\bx_1}}) ^p \chi\mathrm{Op}_h^{\mathrm{w}} (\chi_1) \bB 
	= \sum_{j=0}^p h^j \bB^{(j,p)} (h \ora{D_{\bx_1}})^{p-j}.
\end{equation*}
When applying $(h D_{\bx_1})^p \chi\mathrm{Op}_h^{\mathrm{w}} (\chi_1)  \bB$ to $\bpsi_h$, all the 
terms with $j\neq p$ are $\mO(h^\infty)$ by assumption, so that we have 
\begin{equation*}
	\|(h D_{\bx_1})^p \chi\mathrm{Op}_h^{\mathrm{w}} (\chi_1)\bB \bpsi_h\|_{L^2}= \mO(h^p).
\end{equation*}
Therefore, if we have $|h n_1| > h^{1-\varepsilon}$ for some $\varepsilon>0$, 
we have 
\begin{equation*}
	\langle \chi\mathrm{Op}_h^{\mathrm{w}} (\chi_1)\bB \bpsi_h, e_n \rangle  = \mO(h^\infty).
\end{equation*}
The result follows.
\end{proof}
\subsection{Proof of Proposition \ref{Prop:DecompLag}}
\label{sec:ProofOfProp}
This section is devoted to the proof of Proposition \ref{Prop:DecompLag}.
\\
\par
Let $0<\mu_1<\mu_2$, and let $E_h\in (\mu_1,\mu_2)$.  \ora{We will also write $\mu_{12}= \frac{\mu_1+\mu_2}{2}$.}
\\
\\
\textbf{Step 1: Classical normal form.}
The operator $P_h = -h^2\Delta_g \ora{- \mu_{12}}$ has principal symbol $p(x,\xi)= |\xi|^2-\ora{\mu_{12}}$. 
In this step, we shall use Darboux's theorem to write $p$ in a simpler way 
in a chart. A point in $\R^{2d}$ will be denoted by $(\boldsymbol{x}_1,\dots
\bx_d, \bxi_1, ... \bxi_d)$, and the canonical basis of $\R^{2d}$ will be 
denoted by $\left(e_1,..., e_{2d} \right)$.
\par
Let $\rho_0\in S^*X\subset T^*X$. Let $\cU_\iota\subset X$ be an open set to which 
$\pi(\rho_0)$ belongs, as in the discussion before (\ref{eq:defCL}). Up to 
adding a constant vector to $\varphi_\iota$, we may suppose that 
$\varphi_\iota (\pi(\rho_0)) = 0$. We then lift $\varphi_\iota$ to a symplectic 
map $\kappa_\iota :  T^*X \supset B(\rho_0, \varepsilon_0) \longrightarrow 
V_{\rho_0}\subset \R^{2d}$ for some $\varepsilon_0>0$, with $\kappa_{\iota}(x,\xi)=
(\varphi_\iota(x),\ora{\left((d_x\varphi_\iota)^{-1}\right)^T}\xi)$ and 
$\kappa_{\iota}(\rho_0) = (0,0)$. Here, $B(\rho_0, \varepsilon_0)$ denotes the 
geodesic ball of radius $\varepsilon_0$ and centered at $\rho_0$. 
\par
Up to shrinking $\varepsilon_0$, we may find a symplectomorphism 
$\kappa'_{\rho_0}: V_{\rho_0} \longrightarrow \wt{V}_{\rho_0}\subset
\R^{2d}$ such that 
\begin{equation}\label{eq:ConjugClassic}
p \circ (\kappa'_{\rho_0} \circ \kappa_{\iota})^{-1} = \bxi_1
\end{equation} 
and $\kappa'_{\rho_0}(0,0)= (0,0)$. This follows from the variant of 
Darboux's theorem presented in \cite[Theorem 21.1.6]{Ho84} (with $k=1, j=0$). 
Since the Hamilton vector field $H_{p_h}$ becomes $H_{\bxi_1}$ in those 
new coordinates, we see that 
up to composing $\kappa'_{\rho_0}$ with a linear map, we may suppose that 
$d_{\rho_0}(\kappa'_{\rho_0} \circ \kappa_{\iota}) (E^-_{\rho_0}) 
= \mathrm{span} (e_2,... ,e_d)$.
\par
The map $\kappa'_{\rho_0}$ is built by solving a differential equation whose coefficients depend smoothly on the metric $g$ in a neighborhood of $\rho_0$.
In particular, 
by compactness of $X$, we may find for any $L\in \N$ a constant $C_L>0$ which 
does not depend on $\rho_0$ such that
\begin{equation}\label{eq:CLKappa}
 \|\kappa'_{\rho_0}\|_{C^L(B(\rho_0, \varepsilon_0))}\leq C_L.
\end{equation}
Now, since $\rho \mapsto E^-_\rho$ is (Hölder) continuous, for any $\rho$ 
close enough to $\rho_0$, the affine spaces $d_{\rho} (\kappa_{\rho_0}' 
\circ \kappa_{\iota}) (E^-_{\rho})$ and $\kappa_{\rho_0}' \circ \kappa_{\iota}(\rho) + \mathrm{span} 
(e_2,... ,e_d)$ will make a small angle at $\rho$. This implies that, for 
any $\varepsilon_1>0$ there exists an $\varepsilon'_0>0$ such that, 
if $|\bxi| < \varepsilon'_0$,  the manifold $(\kappa'_{\rho_0} 
\circ \kappa_{\iota})^{-1}(B(0,\varepsilon_0) \times \{\bxi\})$ is 
$\varepsilon_1$-unstable. Here, $B(0, \varepsilon_0)$ denotes the open 
ball in $\R^d$ of radius $\varepsilon_0$ and centered at $0$. Furthermore, 
thanks to \eqref{eq:ConjugClassic}, this manifold is $(\bxi_1 + \ora{\mu_{12}})$-monochromatic.
\\
\\
\textbf{Step 2: Quantum normal form.} We now quantize the map 
$(\kappa_{\rho_0}' \circ \kappa_{\iota})^{-1}$, as in Appendix \ref{sec:FIO}. 
First notice that $\kappa_{\rho_0}'$ satisfies (\ref{eq:Block}), 
and thus, that $\kappa_{\rho_0}'$ can be quantized as in (\ref{eq:LocalFIO}), 
since
the manifold $E_{\rho_0}^-$ is transverse 
to the vertical fibers for any $\rho_0$. 
Hence, by the discussion in Appendix \ref{sec:FIO} and by \eqref{eq:LocalFIO3b} 
in particular, we can find an operator $S_{\rho_0, h} : L^2( \R^d) \to L^2(X)$, 
having a microlocal inverse $T_{\rho_0, h}$, such that, writing 
$U= B(\rho_0, \varepsilon_{\rho_0})$, $\kappa =\kappa'_{\rho_0} 
\circ \kappa_{\iota}$ and $U' = \kappa (U)$, for all 
$a'\in S^\comp(U')$
\begin{equation}\label{eq:FIO1}
\begin{split}
&S_{\rho_0,h} \Op_h(a') T_{\rho_0, h}
	= \Op_h(a) + \mO(h^\infty)_{\Psi^{-\infty}},\\
& \Op_h(a') 
	= T_{\rho_0, h}\Op_h(a)S_{\rho_0,h} + \mO(h^\infty)_{\Psi^{-\infty}},\\
\end{split}
\end{equation}
where $a\in S^\comp(X)$ with $a = a' \circ \kappa + \mO(h)$ near 
$\rho_0$ and $\WF_h(S_{\rho_0,h} \Op_h(a') T_{\rho_0, h}) 
\subset \kappa^{-1}(\WF_h(\Op_h^w(a)))$. Moreover, 
\begin{equation}\label{eq:FIO1b}
	T_{\rho_0, h}= S_{\rho_0,h} ^{-1} \quad \text{ microlocally near }
	U\times U.
\end{equation}
%
%
%
Arguing as in the proof of \cite[Theorem 12.3]{Zw12} or 
\cite[Proposition 26.1.3]{Ho85}, we may compose  
$S_{\rho_0, h}$ with an elliptic pseudo-differential operator 
$A_{\rho_0}\in \Psi_h^0(\R^d)$, and $T_{\rho_0,h}$ with the inverse 
$A_{\rho_0}^{-1}$, to obtain new operators $\wit{S}_{\rho_0, h} = 
S_{\rho_0, h}\circ A_{\rho_0}$ and $\wit{T}_{\rho_0, h}=A_{\rho_0}^{-1}
\circ T_{\rho_0,h}$ such that 
%
\begin{equation}\label{eq:FIO2}
	P_h 
	= \wit{S}_{\rho_0,h} (h D_{x_1}) \wit{T}_{\rho_0,h} 
	\quad \text{ microlocally near }
	U\times U.
\end{equation}
Note that operators $\wit{S}_{\rho_0, h}$ and $\wit{T}_{\rho_0, h}$ 
also satisfy \eqref{eq:FIO1}, \eqref{eq:FIO1b}. 
%
\\
\\
\textbf{Step 3: Partition of unity.} 
Let $\varepsilon, \varepsilon_1>0$. For every $\rho\in 
\mathcal{E}_{(\mu_1, \mu_2)}$, the first two steps allow us to find 
$\varepsilon_\rho \in (0, \varepsilon)$, $\zeta_\rho>1$ and a symplectomorphism 
$\kappa_\rho$ from $B(\rho,  \zeta_\rho \varepsilon_\rho)$ to a neighborhood 
$\boldsymbol{U}_\rho$ of $(0,0)\in \R^{2d}$ such that 
\begin{itemize}
\item $\kappa_\rho^*\,\ora{p} = \boldsymbol{\xi}_1$;
\item There exist $\sigma_\rho,  \varrho_\rho>0$ such that 
	$ \bU_\rho':= \kappa_\rho(B(\rho, \varepsilon_\rho)) \Subset  
	(-\sigma_\rho \pi, \sigma_\rho \pi)^d \times (-\varrho_\rho, \varrho_\rho)^d 
	\subset \boldsymbol{U}_\rho$.
\item Up to possibly shrinking $\varepsilon_\rho$, we may reduce $\sigma_\rho$ 
and $\varrho_\rho$ so that, for any $\bxi\in (-\varrho_\rho, \varrho_\rho)^d$ 
with $|\bxi|< \varrho_\rho$, the manifold $\Lambda_{\rho, \bxi} 
:= \kappa_\rho^{-1}((-\varrho_\rho, \varrho_\rho)^d \times \{\bxi\}))$ is 
$\varepsilon_1$-unstable;
\item There exist operators $T_\rho : L^2(X)\longrightarrow L^2(\R^d)$ and 
$S_\rho : L^2(\R^d) \longrightarrow L^2(X)$ such that \eqref{eq:FIO1}, 
\eqref{eq:FIO1b}, \eqref{eq:FIO2} hold with 
$U= B(\rho, \zeta_\rho \varepsilon_\rho)$, $U'= \kappa_\rho(U)$.
\end{itemize}
If $\varepsilon_1$ has been chosen small enough, the second point above 
implies that $\Lambda_{\rho,\bxi}$ is transverse to the vertical fibers, 
and thus projects smoothly onto $X$. Since it is relatively compact and 
simply connected, it may be written as 
\begin{equation*}
\Lambda_{\rho,\bxi} = \{(x, d_x \phi_{\rho,\bxi}), x\in V_{\rho,\bxi}\}
\end{equation*}
for some smooth function $\phi_{\rho,\bxi}$ defined on some open set 
$V_{\rho,\bxi} \subset \pi_X \left(B(\rho, \zeta_\rho \varepsilon_\rho)\right)$. 
The manifold $\Lambda_{\rho, \bxi}$ is $(\ora{\mu_{12}}+ \xi_1)$-monochromatic, and
is thus $\lambda$-monochromatic for some $\lambda\in (\mu_1- \ora{h^{1-\varepsilon}}, 
\mu_2+ \ora{h^{1-\varepsilon}})$ provided $\varrho_\rho$ has been chosen small enough.
Furthermore, thanks to (\ref{eq:CLKappa}), for every $L\in \N$, there exists 
$C'_L>0$ which does not depend on $k$, $\varepsilon$ or $\varepsilon_1$ such 
that 
\begin{equation*}
\|\phi_{\rho, \bxi}\|_{C^L} \leq C'_L.
\end{equation*}
By compactness, we may find a finite set of points $(\rho_k)_{k\in \mathcal{K}}$ such 
that $\mathcal{E}_{(\mu_1, \mu_2)} \subset  \bigcup_{k\in \mathcal{K}} 
B(\rho_k, \varepsilon_k)$.  In the rest of the proof, we will write all the 
quantities defined above with indices $k$, instead of indices $\rho_k$, to 
lighten notations.
\\
\par
Let us consider three families of functions on $T^*X$: $a_k \in C_c^\infty
(B(\rho_k,  \varepsilon_k); [0,1])$, $a'_k, a_k''\in 
C_c^\infty(B(\rho_k, \zeta_k \varepsilon_k); [0,1])$ ($k\in \mathcal{K}$), 
with $\sum_{k\in \mathcal{K}} a_k \equiv 1$ on $S^*X$, $a'_k\equiv 1$ on 
$B(\rho_k, \varepsilon_k)$ and $a''_k \equiv 1$ on the support of $a'_k$. 
We quantize these into $A_{k} :=\Op_h(a_k)$, $A'_{k} :=\Op_h(a'_k)$, 
$A''_{k} :=\Op_h(a''_k)$, so that we have
\begin{equation}\label{eq:PartitionUnity}
\sum_{k\in \cK} A_k = \mathrm{Id}  \text{ microlocally near }
	 \mathcal{E}_{(\mu_1, \mu_2)}\times \mathcal{E}_{(\mu_1, \mu_2)}.
\end{equation}
\\
\\
\textbf{Step 4: Applying the toy model.}
First of all, note that for every $k\in \cK$,
\begin{equation}\label{eq:Commut}
A'_k P_h A''_k  = A'_k P_h + \mO(h^\infty)_{\Psi^{-\infty}},
\end{equation}
where $ \mO(h^\infty)_{\Psi^{-\infty}}$ denotes an operator of residual 
class $h^\infty \Psi^{-\infty}$. 
Let $\psi_h$ be such that $-h^2\Delta_g \psi_h = E_h \psi_h$, 
with $\|\psi_h\|_{L^2}=1$. We then have, thanks to (\ref{eq:FIO2}) and to 
(\ref{eq:Commut})
\begin{align*}
\left(T_k A'_k S_k \right)  (h D_{x_1}  \ora{+ \mu_{12} - E_h})   \left( T_k A''_k \psi_h\right) 
	&= T_k \left( A'_k  (P_h  \ora{+ \mu_{12} - E_h})  A''_k \right) \psi_h + \mO_{C^\infty}(h^\infty)\\
	&= T_k A'_k (P_h \ora{+ \mu_{12} - E_h})   \psi_h + \mO_{C^\infty}(h^\infty)\\
	&= \mO_{C^\infty}(h^\infty).
\end{align*}
\par
Note that by \eqref{eq:CoordChange}, the error term has compact support. 
\par
By \eqref{eq:FIO1}, we know that  
$T_k A'_k S_k = \Op_h(\boldsymbol{a}'_k)+ \mO(h^\infty)_{\Psi^{-\infty}}$, where $\boldsymbol{a}'_k>0$ 
in $\bU'_k$. Therefore, $T_k A'_k S_k$ is microlocally invertible near $\bU'_k \times \bU'_k$. 
 Hence, whenever $\bA\in \Psi_h^{\comp}(\R^d)$ is microsupported in 
$\bU'_k$, we have $\bA (hD_{x_1} \ora{+ \mu_{12} - E_h} ) \left( T_k A''_k \psi_h\right) = \mO_{L^2}(h^\infty)$. 
We may apply Lemma \ref{lem:ToyModel} to $\bB = T_k A_k S_k$, which is 
microsupported in $\bU'_k$ thanks to \eqref{eq:FIO1}. 
Let $\bchi_k \in C_c^\infty((- \sigma_k\pi,  \sigma_k\pi)^d)$ be equal to $1$ 
on $\pi (\bU_k')$. We deduce that in $(- \sigma_k\pi,  \sigma_k\pi)^d$
\begin{align*}
T_k A_k  \psi_h&= \left(T_k A_k S_k \right) T_k A''_k \psi_h 
	 + \mO_{C^\infty((- \sigma_k\pi,  \sigma_k\pi)^d)}(h^\infty)\\
&= \bchi_k \left(T_k A_k S_k \right) T_k A''_k \psi_h 
	+ \mO_{C^\infty((- \sigma_k\pi,  \sigma_k\pi)^d)}(h^\infty)\\
&= \bchi_k  \frac{\sigma^{d/2}}{(2\pi)^{d/2}} \sum_{\underset{|n| < \varrho_k/(\sigma_k h) }{n\in \Z^{d} ; 
|n_1 \ora{+ h^{-1}(E_h - \mu_{12}})| < h^{-\varepsilon}}} a^k_n e^{i\sigma_k n\cdot x} 
	+ \mO_{C^\infty((- \sigma_k\pi,  \sigma_k\pi)^d)}(h^\infty),
\end{align*}
for some sequence $(a^k_n)_{n\in \Z^d}$ with 
\begin{equation}\label{eq:Meeeec}
	\|a^k\|_{\ell^2} = 
C_k \| T_k A_k'' \psi_h  \|_{L^2} +\mO(h^\infty).
\end{equation}
\textbf{Step 5: Applying Fourier Integral Operators to Lagrangian states.}

Since $-h^2\Delta_g\psi_h = E_h\psi_h$, it follows that $\WF_h(\psi_h)
\subset \mathcal{E}_{\ora{[}\mu_1,\mu_2\ora{]}}$, see for instance 
\cite[Proposition E.39]{DyZw19}. Thanks to (\ref{eq:PartitionUnity}), we 
may write
\begin{equation}\label{eq:Martin,EcrisTonHDRPlutotQueDeRegarderLeNomDesEquations}
\begin{aligned}
\psi_h &= \sum_{k\in \cK} A_k \psi_h +\mO_{C^\infty(X)}(h^\infty)\\
&= \sum_{k\in \cK} S_k \left[\sum_{n\in \Z^{d} ; |n_1\ora{+ h^{-1}(E_h - \mu_{12}})| < h^{-\varepsilon}} 
	\mathbf{1}_{\{|n| < \varrho_k/(\sigma_k h)\} }a^k_n \be_{k,n}  \right] 
	+ \mO_{C^\infty(X)}(h^\infty),
\end{aligned}
\end{equation}
where $\be_{k,n}(\bx) = \bchi_k(\bx)  \frac{\sigma_k^{d/2}}{(2\pi)^{d/2}}e^{i\sigma_k n\cdot \bx} $.
\\
\par
Next, recall from the second step that $S_k$ may be written as 
$S_{k, \varphi_k} \circ S_k' \circ B_k$, with $S_k'$ as in (\ref{eq:LocalFIO}), 
and $S_{k, \varphi_k}$ as in (\ref{eq:CoordChange}), and where $B_k$ is a 
$\Psi$DO that is elliptic at $(0,0)$. Applying Lemma \ref{lem:FIOonLag} twice, 
we obtain that  $(S_k' \circ B_k) \be_{k,n}$ is a Lagrangian state, of the form 
\begin{equation*}
	\bchi_{n,k} (\bx ; h) e^{\frac{i}{h} \phi_{n,k}(\bx)} 
	+ \mO_{\cS(\R^d)}(h^\infty),
\end{equation*}
where $\{(\bx,  \partial_{\bx}  \phi_{n,k}(\bx))\} = (\kappa_k')^{-1} 
\left(\{ (\bx,  h \sigma n) ; \bx\in  (-\sigma_k \pi, \sigma_k \pi)^d \} 
\right)$, and where for any $\ell\in\N$,  $\|\bchi_{n,k} \|_{C^\ell}$ is 
bounded independently of $h$, $k$ and $n$. We thus have 
\begin{equation}\label{eq:MorceauDecompoLag}
(S_k\be_{k,n})(x) 
	= \chi_{k,n}(x; h) e^{\frac{i}{h} \phi_{k,n}(x)} + 
	\mO_{C^\infty(X)}(h^\infty),
\end{equation}
with
\begin{equation*}
	\Lambda_{k,n} := \{ (x, \partial_x \phi_{k,n})\} 
	=\kappa_k^{-1} \left(\{ (\bx,  h \sigma n) ; \bx\in 
		(-\sigma_k \pi, \sigma_k \pi)^d \} \right)
\end{equation*}
and with $\|\chi_{k,n}\|_{C^\ell}$ bounded independently of $h$, $k$ and 
$|n| < \varrho_k/(\sigma_k h)$. Furthermore, $\chi_{k,n}$ may be 
written as $\chi_{k,n} (x;h) = \sum_{j=0}^J h^j \chi_{j,k,n}(x) +  
h^{J+1} R_{J+1}(x;h)$, with the  $\chi_{j,k,n}$ and $R_{J+1}$ having 
their $C^\ell$ norms and support bounded independently of $h$, $k$ and 
$|n| < \varrho_k/(\sigma_k h)$. Finally,  since $\kappa_k^{-1}$ is 
smooth,  the derivatives of $\phi_{k,n}$ are bounded independently of $k,n$, and
there exists $C_1, C_2>0$, independent of $h$ such that, for any 
$n,n'$ with $|n|, |n'| < \frac{\varrho_k}{\sigma_k h}$,
\begin{equation}\label{eq:EcartsLag}
C_1 h |n'-n|
	\leq \mathrm{dist} \left(\Lambda_{k,n},\Lambda_{k,n'} \right) 
	\leq C_2 h |n'-n|.
\end{equation}
\\
\textbf{Step 6: Regrouping the Lagrangian states.}
Let us set $N_h := \lfloor h^{\gamma - 1 - \varepsilon} \rfloor$,  for 
$\varepsilon>0$ small enough, so that 
\begin{equation*}
	\{n\in \Z^d;  |n_1\ora{+ h^{-1}(E_h - \mu_{12})}| < h^{-\varepsilon} \} 
	=  \bigsqcup_{\underset{|n_1| < h^{-\varepsilon}}{n_1\in \Z}} 
	\bigsqcup_{m\in (\Z / N_h \Z)^{d-1}} 
		\left[ (n_1,m)+ \{0\}\times 
		(N_h \Z )^{d-1}\right].
\end{equation*}
Write 
\begin{equation*}
	\cI'_h:= \{ n_1\in \Z ; |n_1\ora{+ h^{-1}(E_h - \mu_{12})}| < h^{-\varepsilon}\} \times  (\Z / N_h \Z)^{d-1},
\end{equation*}
and, for every $(n_1,m)\in \cI'_h$, 
\begin{equation*}
	\mathcal{J}_{n_1,m} 
	:= \{ n=(n_1,..., n_d)\in (n_1,m)+ \{0\}\times (N_h \Z)^{d-1} 
		\text{ such that } |n| < C_0 h^{-1} \}.
\end{equation*}
By to \eqref{eq:Martin,EcrisTonHDRPlutotQueDeRegarderLeNomDesEquations} 
and (\ref{eq:MorceauDecompoLag}), we have
\begin{align*}
\psi_h = \sum_{k\in \cK} \sum_{(n_1,m) \in \cI_h'} \sum_{n\in \mathcal{J}_{n_1,m}} 
	a_n^k \chi_{k,n}(\cdot ; h) e^{\frac{i}{h} \phi_{k,n}} 
	+ \mO_{C^\infty}(h^\infty).
\end{align*}
For each $k\in \cK$ and $(n_1,m)\in \cI_h'$, the function 
\begin{equation}\label{eq:Meeeec2}
	g_{k,n_1,m}(x) 
	:= \sum_{n\in \mathcal{J}_{n_1,m}} a_n^k \chi_{k,n}(x; h) 
	e^{\frac{i}{h} \phi_{k,n}(x)}
\end{equation}
is a $C\varepsilon$-sharp (for some $C>0$ 
independent of $h$, $k$ and $(n_1,m)$),  $h^\gamma$-separated \ora{superposition of} $\eta$-unstable 
Lagrangian state of energy $(\mu_1 - \ora{h^{1-\varepsilon}}, \mu_2+ \ora{h^{1-\varepsilon}})$
in $U=  \pi_X(B(\rho_k,  \varepsilon_k)) \subset\pi_X(B(\rho_k, \varepsilon))$, 
where the $h^\gamma$-separatedness comes from (\ref{eq:EcartsLag}).
%
\\
\par
Thanks to (\ref{eq:CLKappa}), we have $\|\phi_{k,n}\|_{C^3} \leq D$ for some 
$D$ independent of $h$, $k$ and $(n_1,m)$. Since the manifold 
$\{ (\bx,  h \sigma n) ; \bx\in  (-\sigma_k \pi, \sigma_k \pi)^d \}\subset 
\R^{2d}$ has distortion $1$ (here we used tacitly the standard Euclidean 
metric on $\R^{2d}$ and on $\R^d$),  its image $\Lambda_{k,n}$  by $\kappa^{-1}$ 
is $D'$-distorted for some $D'>0$ independent of $h$, $k$ and $(n_1,m)$.
Noting that $\cI_h = \cK \times \cI_h'$ has cardinal 
$\mO(h^{(d-1)(\gamma - 1)-C\varepsilon})$, the first part of Proposition 
\ref{Prop:DecompLag} follows, by taking $\varepsilon$ possibly smaller.
\\
\par
Finally, we note that, if $\gamma> \frac{1}{2}$, the $h^\gamma$-separatedness, 
along with Lemma \ref{Lem:NonStat} implies that 
\begin{align*}
\|g_{k,n_1,m}\|_{L^2}^2 
	&= \sum_{n\in \mathcal{J}_{n_1,m}} |a_n^k|^2 \|\chi_{k,n}(x; h)\|_{L^2}^2 
		+ \mO(h^\infty)\\
&\leq C  \sum_{n\in \mathcal{J}_{n_1,m}} |a_n^k|^2 + \mO(h^\infty).
\end{align*}

In view of \eqref{eq:Meeeec} and the fact that $\cK$ is finite, 
equation \eqref{eq:Orthogonalite} follows, thus 
concluding the proof of Proposition \ref{Prop:DecompLag}.
\section{Propagating superpositions of Lagrangian states}\label{sec:Proof}
Let $0<\varepsilon < \mu_1 \ora{\leq }\mu_2$
, and let $g_{h}$ be an 
$\varepsilon$-sharp, $D$-bounded, $h^{\beta - \varepsilon}$-separated 
superposition of $\eta$-unstable Lagrangian states of energy 
$(\mu_1- \ora{h^{1-\varepsilon}},\mu_2+\ora{h^{1-\varepsilon}})$ in a set $U$, as in 
Proposition \ref{Prop:DecompLag}:
\begin{equation}\label{eq:Meeec3}
	g_h(x) = \sum_{j\in J_h} s_j a_j(x;h) e^{\frac{i}{h}\phi_j(x)}.
\end{equation}
We shall write 
\begin{equation*}
	g_{h,t,\omega}:= e^{i \frac{t}{h} P_h^\delta} g_h,
\end{equation*}
and we shall be interested in $\|g_{h,t,\omega}\|_{L^\infty}$.
\\
\par
The random function $g_{h,t,\omega}$ may be described using the results 
of \cite{IngVog}. To apply the results from this paper, we must 
assume that the Lagrangian manifolds $\Lambda_j$ have distortion $\leq D$, 
with $ \|\phi_j\|_{C^3}\leq D$, and are $\eta$-unstable, for some 
$\eta(D)>0$. Indeed, this assumption is justified by Proposition 
\ref{Prop:DecompLag}. Furthermore, the time $t>0$ being fixed (and 
independent of $h$), we will assume that the set $U$ has diameter 
$< \frac{c_0}{2 C_0 e^{C_0 t}}$, with $c_0$ as in (\ref{eq:DistCover}) 
and $C_0$ as in (\ref{eq:ExpRate}). 

Note that, in \cite{IngVog},  all the estimates on the propagation of a Lagrangian state $a(\cdot; h) e^{ \frac{i}{h} \phi}$ associated with a Lagrangian manifold $\Lambda$ depend only on the distortion and instability of $\Lambda$,  and on bounds on the derivatives of $a(\cdot ; h )$ and $\phi$. Therefore,  when propagating a function of the form (\ref{eq:Meeec3}), all the estimates are uniform with respect to the parameter $j\in J_h$. Furthermore, if, as in Proposition \ref{Prop:DecompLag}, the function $g_h$ depends on an index $i\in \cI_h$, then all the estimates will be independent of $i$.
\subsection{Propagation of Lagrangian states}
Given \eqref{eq:Meeec3}, let us write 
\begin{equation*}
	\Lambda_j := \{ (y, d_y \phi_j) ; y\in U \}.
\end{equation*}
Strictly speaking, the WKB method presented in \cite{IngVog} applies only 
to Lagrangian states $a_j(x) e^{\frac{i}{h}\phi_j(x)}$ with 
$|\partial_x \phi_j|=1$ on $\spt~a_j$.  However, if 
$|\partial_x \phi_j|^{2}=\lambda_j$ for some $\lambda_j\in 
\left(\mu_1 - h^{1-\varepsilon}, \mu_2 + 
h^{1-\varepsilon} \right)$,
we may just rescale the metric on the manifold $X$ by a factor $\lambda_j$, 
and apply the results of \cite[Sections 5,6,7]{IngVog}.
%
\\
\par
First of all, \cite[Equation (7.25)]{IngVog} implies that, for any $N\in \N$, 
we may write
\begin{equation}\label{eq:Rappel}
g_{h,t,\omega}(x)
	= \sum_{j\in J_h} s_j \sum_{\substack{\pi(\wit{x})=x \\ \wit{x}\in \wit{X}}} 
	b_{j,N}(t, \wit{x} ; \delta,h) e^{\frac{i}{h}\phi_{j,t, \delta} (\wit{x})} 
	+ \mO_{C^0} (h^N).
\end{equation}

Let $\wit{\Lambda}_j$ be a fixed lift of $\Lambda_j$ to the universal cover 
$\wt{X}$ of $X$. 
Then the quantity $ b_{j,N}(t, \wit{x} ; \delta,h)$ vanishes, unless there 
exists a $\xi_j(\wit{x}) \in T_{\wit{x}}^*\wit{X}$ such that 
$\wit{\Phi}^{-t}_\delta (\wit{x},\xi_j(\wit{x}))\in \wit{\Lambda}_j$. 
Recall here \eqref{eq:HamiltonianFlow}. Thanks to the 
assumptions we made on the diameter of $U$ at the beginning of the section, 
we see from (\ref{eq:ExpRate}) that for each $j$ and each $x\in X$, there 
exists at most one $\wit{x}$ such that $ b_{j,N}(t, \wit{x} ; \delta,h)$ 
is non-zero.
\par
For each $x\in X$, let us denote by $J_h(x)$ the set of indices $j\in J_h$ such 
that such a $\wit{x}$ exists. For each $j\in J_h(x)$, we then denote 
by $\wit{x}_{j,t}$ the corresponding lift of $x$, and we shall write 
$b_{j,N}(t,x ; \delta,h)$ and $\phi_{j,t, \delta} (x)$ instead of 
$b_{j,N}(t,\wit{x}_{j,t} ; \delta,h)$ and $\phi_{j,t, \delta} (\wit{x}_{j,t})$. 
The functions $x\mapsto  b_{j,N}(t,  x ; \delta,h)$ and 
$x\mapsto \phi_{j,t, \delta} (x)$ are smooth functions, since 
$y\mapsto \wit{y}_{j,t}$ is a smooth function in a neighborhood of $x$. 
We may thus write
\begin{equation}\label{eq:Rappel2}
g_{h,t,\omega}(x)
	= \sum_{j\in J_h(x)} s_j b_{j,N}(t, x ; \delta,h) e^{\frac{i}{h}\phi_{j,t, \delta} (x)} 
	+ \mO_{C^0} (h^N).
\end{equation}
Setting $b_{j,N}(t, x ; \delta,h) = 0$ if $j\notin J_h(x)$, we may also write
\begin{equation}\label{eq:Rappel3}
g_{h,t,\omega}(x)
	= \sum_{j\in J_h} s_j b_{j,N}(t, x ; \delta,h) 
	e^{\frac{i}{h}\phi_{j,t, \delta} (x)} + \mO_{C^0} (h^N).
\end{equation}
Let us now explain more precisely what the phases 
$\phi_{j,t, \delta} (x)$ and the amplitudes $b_{j,N}(t, x ; \delta,h)$ 
appearing in (\ref{eq:Rappel3}) are. 
\subsection*{The amplitude} We may write
\begin{equation}\label{eq:Rappel3a}
	b_{j,N} = \sum_{k=0}^{K_N} h^k b_{j,k},
\end{equation}
and \cite[Proposition 6.5]{IngVog} implies that 
\begin{equation}\label{eq:EstimDerivAmpl}
\|b_{j,k} (t,\cdot ; \delta, h) \|_{C^{L}} = \mO(h^{(-2k+L)(\beta+ \varepsilon)})
\end{equation}
for any $\varepsilon>0$ small enough. In particular, we have 
$\|h^k b_{j,k} (t,\cdot ; \delta, h) \|_{C^0} \leq C_k h^{k\Gamma}$ since 
$\Gamma < 1-2\beta$, see \eqref{eq:CondGamma.b}, so that 
\begin{equation*}
	b_{j,N} = b_{j,0} + \mO(h^\Gamma).
\end{equation*}
Now, thanks to \cite[Proposition 6.7]{IngVog}, we have
\begin{equation}\label{eq:ApproxAmpl}
b_{j,0} (t, \cdot ; \delta, h) 
	= \hat{b}_{j,0} (t, \cdot) + \mO(h^{\Gamma}),
\end{equation}
where $\hat{b}_{j,0}$ does not depend on $h$ and on the random 
parameter $\omega$. 
%
%
%
\subsection*{The phase} 
Thanks to \cite[Equation (5.26)]{IngVog}, we may write 
\begin{equation}\label{eq:PhiIntegrale2}
\phi_{j,t, \delta} (x)
	= 
	\phi_{j,t, 0} (x) - \delta
		\int_0^t q_\omega\left(\zeta_{j,\delta}^{s,t}(x)\right) d s,
\end{equation}
where, by \cite[Equation (5.29)]{IngVog},
\begin{equation}\label{eq:PetitCoupDeStressDeLaFin}
	\mathrm{dist}_{T^*X}
	\left( \zeta_{j,\delta}^{s,t}(x),  \Phi^{-s}
		(x, d_x \phi_{j,t,0})
	\right) 
	\leq 
	\delta h^{-\beta- 2 \varepsilon}.
\end{equation}
The precise definition of $\zeta_{j,\delta}^{s,t}(x)$ is 
not important for our purposes, but we refer the reader to 
the beginning of \cite[Section 5.2.2]{IngVog} \ora{for more details.} 
Here, the covector $d_x \phi_{j,t,0}$ is the unique 
$\xi\in S_{\lambda_j}^*\wt{X}$ such that 
$\Phi^{-t}(x,\xi) \in \Lambda_{j}$ (see the discussion after (\ref{eq:Rappel}) 
for the existence and uniqueness of such a $\xi$). Furthermore, thanks to 
\cite[Lemma 5.3]{IngVog} we have, for any $\varepsilon_1>0$ small enough
\begin{equation}\label{eq:EcartGradient}
|d_x \phi_{j,t,\delta} - d_x\phi_{j,t,0}| 
	\leq C_{\varepsilon_1} \delta h^{-\beta - \varepsilon_1} 
	\leq C_{\varepsilon_1} h^{\beta - \varepsilon_1},
\end{equation}
where the second inequality comes from \eqref{eq:CondBeta}.
\\
\par
Now, thanks to \eqref{eq:ExpRate}, we have, for $j\neq j'$, 
$|d_x\phi_{j,t,0} - d_x\phi_{j',t,0}|_x \geq c h^{\beta - \varepsilon}$. 
Therefore, we deduce from  (\ref{eq:EcartGradient}) that
\begin{equation}\label{eq:EcartGradient2}
|d_x \phi_{j,t,\delta} - d_x\phi_{j',t,\delta}| \geq c' h^{\beta - \varepsilon}.
\end{equation}
To conclude this paragraph, we recall that it was shown in 
\cite[Proposition 9.2]{IngVog} that 
\begin{equation}\label{eq:QuasiCentree}
\E \left[ e^{\frac{i}{h} \phi_{j,t, \delta} (\wit{x}_{j,t})} \right] 
	= \mO(h^\Gamma).
\end{equation}
\subsection{Independence}
Recall \eqref{eq:Rappel2} and write, for each $j\in J_h$
\begin{align*}
Z_j(x) &:= s_j  \hat{b}_{j,0}(t, x) e^{\frac{i}{h}\phi_{j,t, \delta} (x)},
\end{align*}
with the convention that this quantity is equal to zero if $j\notin J_h(x)$. 

Using \eqref{eq:Rappel3}, we have for any $x\in X$
\begin{equation}\label{eq:FirstOrderL21}
\begin{aligned}
&\bigg\|g_{h,t,\omega}  - \sum_{j\in J_h} Z_j \bigg\|^2_{L^2}
= \sum_{j\in J_h} |s_j|^2  \int_X \left|b_{j,N}(t, x ; \delta,h) -
	 \hat{b}_{j,0}(t, x) \right|^2 dx  \\
	&+ \sum_{j\neq j'}  \int_X  \left(b_{j,N}(t, x ; \delta,h) - \hat{b}_{j,0}(t, x) \right) \left(b_{j',N}(t, x ; \delta,h) -  \hat{b}_{j,0}(t, x) \right)
	e^{\frac{i}{h}(\phi_{j,t, \delta} (x) - \phi_{j',t, \delta} (x))} dx.
	\end{aligned}
\end{equation}

Now, thanks to \eqref{eq:ApproxAmpl}, the integrals in the first term are 
all $\mO(h^{\Gamma})$, so that the first term is a 
$\mO(h^\gamma\|g_h\|_{L^2}^2)$. As to the second sum, an application of 
Lemma \ref{Lem:NonStat} along with (\ref{eq:EstimDerivAmpl}) and 
\eqref{eq:EcartGradient2} implies that the term $j,j'$ it is a 
$\mO(h^\infty |s_j| |s_{j'}|)$. In view of \eqref{eq:CardJh} and 
\eqref{eq:Orthogonalite}, we deduce that the second term in 
\eqref{eq:FirstOrderL21} is an $\mO(h^{\infty} \|g_h\|_{L^2}^2)$, so that
\begin{equation}\label{eq:FirstOrderL22}
\bigg\|g_{h,t,\omega}  - \sum_{j\in J_h} Z_j \bigg\|^2_{L^2}
= \mO(h^\Gamma \|g_h\|^2).
\end{equation}

%
%
%
%
Now,  from \eqref{eq:Rappel3a}, (\ref{eq:EstimDerivAmpl}), it follows that $\|Z_j\|_{L^\infty} 
\leq C \|Z_j\|_{L^2} \leq C |s_j|$, so that
\begin{equation}\label{eq:LinfiniL2}
\begin{aligned}
\sum_{j\in J_h} \|Z_j\|^2_{L^\infty} 
	&\leq C \sum_{j\in J_h} \|Z_j\|_{L^2}^2\\
	&\leq C \bigg\| \sum_{j \in J_h} Z_j \bigg\|_{L^2}^2 
		+ C \sum_{j\neq j' \in J_j} \left|\int_X Z_j(x) \overline{Z_{j'}(x)} 
			\mathrm{d}x\right|\\
			&\leq \|g_{h,t,\omega}\|^2_{L^2} + \mO(h^\Gamma \|g_h\|^2) + \mO(h^\infty \|g_h\|^2),
\end{aligned}
\end{equation}
thanks to (\ref{eq:FirstOrderL22}), and to the paragraph that precedes.
By unitarity of the propagator, we deduce that
%
%
\begin{equation}\label{eq:LinfiniL3}
\begin{aligned}
\sum_{j\in J_h} \|Z_j\|^2_{L^\infty} 
&\leq C \|g_h\|_{L^2}^2.
%
\end{aligned}
\end{equation}
Note that, here, the constant $C$ does not depend on $h$ and $\omega$. From 
now on, $x$ will be a fixed parameter, and we will write $Z_j$ for $Z_j(x)$. 
We will also write 
\begin{equation*}\label{eq:SumOfZ}
	Z:= \sum_{j\in J_h} Z_j.
\end{equation*}
\begin{lem}
The family of random variables $\left(Z_j \right)_{j\in J_h(x)}$ is independent.
\end{lem}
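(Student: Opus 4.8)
The claim is that the random variables $(Z_j)_{j\in J_h(x)}$ are independent, where $Z_j = s_j \hat b_{j,0}(t,x)\,e^{\frac{i}{h}\phi_{j,t,\delta}(x)}$. The deterministic prefactor $s_j\hat b_{j,0}(t,x)$ carries no randomness, so it suffices to show that the phases $\phi_{j,t,\delta}(x)$, as functions of $\omega$, depend on pairwise disjoint subsets of the coordinates $\{\omega_\ell\}_{\ell\in J_h}$; since the $\omega_\ell$ are independent, any collection of functions each depending on a distinct block of coordinates is automatically independent. The plan is therefore to identify, for each fixed $j\in J_h(x)$ and fixed $x$, the finite set $S_j\subset J_h$ of indices $\ell$ such that $\omega_\ell$ actually enters $\phi_{j,t,\delta}(x)$, and to prove $S_j\cap S_{j'}=\varnothing$ for $j\neq j'$.

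First I would read off from \eqref{eq:PhiIntegrale2} that the only $\omega$-dependence in $\phi_{j,t,\delta}(x)$ comes through the integral $\delta\int_0^t q_\omega\bigl(\zeta_{j,\delta}^{s,t}(x)\bigr)\,ds$ (the curve $\zeta_{j,\delta}^{s,t}$ itself depends on $\omega$, but again only through the $q_\ell$'s). Since $q_\omega = \sum_{\ell\in J_h}\omega_\ell q_\ell$ and each $q_\ell$ is supported in a ball of diameter $\mO(h^\beta)$ (Hypothesis \ref{HypPot} (i)) centred at some $\rho_{\ell,h}$, the coordinate $\omega_\ell$ enters $\phi_{j,t,\delta}(x)$ only if the perturbed backward trajectory $s\mapsto \zeta_{j,\delta}^{s,t}(x)$, $s\in[0,t]$, meets $\spt\,q_\ell$. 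By \eqref{eq:PetitCoupDeStressDeLaFin} this trajectory stays within $\delta h^{-\beta-2\varepsilon}$ of the unperturbed geodesic segment $\{\Phi^{-s}(x,d_x\phi_{j,t,0}) : s\in[0,t]\}$, which by \eqref{eq:CondBeta} is $\mO(h^{\beta-2\varepsilon})$, hence much smaller than $h^\beta$ up to the $\varepsilon$ loss — more precisely I would use that $\delta h^{-2\beta-\varepsilon_0}\le 1$ forces $\delta h^{-\beta-2\varepsilon}\ll h^\beta$. So $S_j$ is contained in the set of $\ell$ whose single-site ball meets an $\mO(h^\beta)$-neighbourhood of the fixed geodesic segment $\gamma_j := \{\Phi^{-s}(x,d_x\phi_{j,t,0})\}_{s\in[0,t]}$.

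The key point is then that the segments $\gamma_j$ and $\gamma_{j'}$ are themselves $h^{\beta-\varepsilon}$-separated in $T^*X$ for $j\neq j'$: they are the backward geodesic flowouts of the covectors $(x,d_x\phi_{j,t,\delta})$ and $(x,d_x\phi_{j',t,\delta})$ over a common base point $x$ — wait, I should be careful and instead run the flow-out of the \emph{unperturbed} covectors $d_x\phi_{j,t,0}$, which differ by at least $ch^{\beta-\varepsilon}$ by the estimate just above \eqref{eq:EcartGradient2}. Since the geodesic flow on the fixed time interval $[0,t]$ is bi-Lipschitz with constant $C_0 e^{C_0 t}$ by \eqref{eq:ExpRate}, and the base points coincide at $s=0$, the two segments remain at distance $\gtrsim h^{\beta-\varepsilon}$ for all $s\in[0,t]$ (shrinking $U$ as already required so that no wrap-around on the universal cover occurs). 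Enlarging each segment by an $\mO(h^\beta)$-neighbourhood still leaves them disjoint once $\varepsilon$ is small, because $h^{\beta-\varepsilon}\gg h^\beta$. Hence $S_j\cap S_{j'}=\varnothing$, the functions $\omega\mapsto\phi_{j,t,\delta}(x)$ depend on disjoint blocks of coordinates, and independence of the $Z_j$ follows from independence of the $\omega_\ell$.

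The main obstacle is the bookkeeping in the second-to-last step: one must verify that the \emph{perturbed} trajectory $\zeta_{j,\delta}^{s,t}$ — which is what literally appears in $q_\omega(\zeta_{j,\delta}^{s,t}(x))$, and whose own dependence on $\omega$ is nonlocal a priori — can nonetheless be confined, uniformly in $\omega$ in the support of the density $m$, to an $\mO(h^\beta)$-tube around the fixed segment $\gamma_j$, so that the set $S_j$ is genuinely $\omega$-independent and well-defined. This is exactly what \eqref{eq:PetitCoupDeStressDeLaFin} together with \eqref{eq:CondBeta} delivers, but one should state it as: for every $\omega$ in the support of the law, $\zeta_{j,\delta}^{s,t}(x)$ lies in the $\mO(h^{\beta-2\varepsilon})$-neighbourhood of $\gamma_j$; then choosing $\varepsilon$ small compared to the separation exponent closes the argument. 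Everything else is a direct combination of the support hypothesis on the $q_\ell$, the gradient separation \eqref{eq:EcartGradient2}, and the Gronwall bound \eqref{eq:ExpRate}.
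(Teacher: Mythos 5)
Your proof is correct and follows essentially the same route as the paper's: separation of the covectors $d_x\phi_{j,t,0}$ at $x$, propagation of that separation along the backward geodesic segments via the Gronwall bound \eqref{eq:ExpRate}, stability under the random perturbation via \eqref{eq:PetitCoupDeStressDeLaFin}, and disjointness of the relevant single-site supports since their diameter $\mO(h^\beta)$ is much smaller than the separation $h^{\beta-\varepsilon}$. Your additional remark that \eqref{eq:PetitCoupDeStressDeLaFin} holds uniformly in $\omega$, so that the index set $S_j$ of random variables actually entering $\phi_{j,t,\delta}(x)$ is deterministic and well-defined, is a point the paper leaves implicit, and is a welcome clarification rather than a deviation.
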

\begin{proof}
Let $j\neq j'\in J_h(x)$. Since $\Phi^{-t}(x, d_x\phi_{j^{(')},t,0}) 
\in \Lambda_{j^{(')}}$, we have 
\begin{equation*}
	\mathrm{dist}_{T^*X}\left(\Phi^{-t}(x, 
	d_x\phi_{j,t,0}), \Phi^{-t}(x, d_x\phi_{j',t,0}) 
	\right)> h^{\beta- \varepsilon},
\end{equation*}
and we deduce from (\ref{eq:ExpRate}) that, for any $s\in [0,t]$, 
\begin{equation*}
	\mathrm{dist}_{T^*X}\left(\Phi^{-s}(x, d_x\phi_{j,t,0}), 
	\Phi^{-s}(x, d_x\phi_{j',t,0}) \right)> c h^{\beta- \varepsilon}.
\end{equation*}
for some $c>0$. Therefore, we deduce from (\ref{eq:PetitCoupDeStressDeLaFin}) 
that, as long as $h$ is small enough, we have
\begin{equation*}
	\mathrm{dist}_{T^*X}
	\left( \zeta_{j,\delta}^{s,t}(x),  \zeta_{j',\delta}^{s,t}(x)
	\right) \geq 
	\frac{c}{2} h^{\beta- \varepsilon}.
\end{equation*}
We then deduce from (\ref{eq:PhiIntegrale2}) that $Z_j$ and $Z_{j'}$ depend 
on sets of random parameters $(\omega_i)$ that are disjoint. The result 
follows.
\end{proof}
Thanks to (\ref{eq:QuasiCentree}), we deduce that, for any $j\in J_h$, we have 
\begin{equation*}
	\E[Z_j] =\mO(h^{\Gamma}) |s_j|.
\end{equation*}
We then have 
\begin{align*} |\E[Z]| &\leq C h^\Gamma \sum_{j\in J_h} |s_j| \\
&\leq C h^\Gamma \|g_h\|_{L^2}  \times |J_h|^{1/2}\\
&\leq  C h^{\Gamma - \frac{(d-1) (\beta-\varepsilon)}{2}} \|g_h\|_{L^2}
\end{align*}
by Cauchy-Schwarz inequality and (\ref{eq:CardJh}), up to possibly shrinking 
$\varepsilon$.
\begin{corollary}\label{cor:SmallZ}
For any $\varepsilon>0$, 
\begin{equation*}
	\prob\left[ |Z|> \|g_h\|_{L^2}  \times \mO(h^{-\varepsilon}) 
	\left(1 + h^{\Gamma -  \frac{(d-1) (\beta - \varepsilon)}{2}} \right) \right] 
	= \mO(h^\infty).
\end{equation*}
\end{corollary}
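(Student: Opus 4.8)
The plan is to read the corollary off the estimates that are already in place, via a concentration inequality for sums of independent bounded random variables. Throughout I keep $x\in X$ and $\varepsilon>0$ fixed, so that, as agreed after \eqref{eq:LinfiniL3}, $Z_j$ denotes the number $Z_j(x)$ and $Z=\sum_{j\in J_h}Z_j=\sum_{j\in J_h(x)}Z_j$ (the summands with $j\notin J_h(x)$ vanish). The three facts I use are: the family $(Z_j)_{j\in J_h(x)}$ is independent (the Lemma above); $\sum_{j\in J_h}|Z_j|^2\leq C\|g_h\|_{L^2}^2$ with $C$ independent of $h$ and $\omega$, which is \eqref{eq:LinfiniL3} evaluated at $x$ (note that $|Z_j|=|s_j\,\hat b_{j,0}(t,x)|$ does not depend on $\omega$, only the phase of $Z_j$ does); and $|\E[Z]|\leq C h^{\Gamma-\frac{(d-1)(\beta-\varepsilon)}{2}}\|g_h\|_{L^2}$, which is the display preceding the statement.

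The next step is to center and split into real and imaginary parts: $Z-\E[Z]=\sum_{j\in J_h(x)}(Z_j-\E[Z_j])$, where for each $j$ the real random variable $\Rea Z_j$ takes its values in an interval of length at most $2|Z_j|$, and the family $(\Rea Z_j)_{j\in J_h(x)}$ is independent; the same holds with $\Ima$ in place of $\Rea$. Put $t:=\tfrac12\|g_h\|_{L^2}h^{-\varepsilon}$. Hoeffding's inequality together with $\sum_j(2|Z_j|)^2\leq 4C\|g_h\|_{L^2}^2$ gives
\[
\prob\big[\,|\Rea(Z-\E[Z])|>t\,\big]\ \leq\ 2\exp\!\Big(-\tfrac{2t^2}{\sum_j(2|Z_j|)^2}\Big)\ \leq\ 2\exp\!\big(-c\,h^{-2\varepsilon}\big)
\]
for some $c>0$ independent of $h$ and $\omega$, and similarly for $\Ima(Z-\E[Z])$. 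Since $|Z-\E[Z]|\leq|\Rea(Z-\E[Z])|+|\Ima(Z-\E[Z])|$, a union bound yields $\prob[\,|Z-\E[Z]|>2t\,]\leq 4\exp(-c\,h^{-2\varepsilon})=\mO(h^\infty)$.

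On the complement of that event one has, using the mean bound and $h^{-\varepsilon}\geq 1$,
\[
|Z|\ \leq\ |\E[Z]|+|Z-\E[Z]|\ \leq\ C h^{\Gamma-\frac{(d-1)(\beta-\varepsilon)}{2}}\|g_h\|_{L^2}+\|g_h\|_{L^2}h^{-\varepsilon}\ \leq\ \|g_h\|_{L^2}\,\mO(h^{-\varepsilon})\Big(1+h^{\Gamma-\frac{(d-1)(\beta-\varepsilon)}{2}}\Big),
\]
which is the claimed bound; as it fails only on an event of probability $\mO(h^\infty)$, the corollary follows. There is no real obstacle here: the substantive work — the WKB representation \eqref{eq:Rappel3}, the independence of the $Z_j$, the approximate centering \eqref{eq:QuasiCentree}, and the $L^2\!\to\!L^\infty$ control \eqref{eq:LinfiniL3} — has already been carried out, and the only point needing attention is that it is the gain $h^{-\varepsilon}\to\infty$ (and not merely the size $\|g_h\|_{L^2}$) that turns the sub-Gaussian tail into an $\mO(h^\infty)$ estimate. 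One could equally invoke Bernstein's inequality, keeping in mind that $\max_{j}|Z_j|\leq C\|g_h\|_{L^2}$ also follows from \eqref{eq:LinfiniL3}.
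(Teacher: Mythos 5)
Your proof is correct, and it follows the same overall architecture as the paper's: center $Z$, use the independence of the $(Z_j)_{j\in J_h(x)}$ together with the deterministic bound $\sum_j|Z_j|^2\leq C\|g_h\|_{L^2}^2$ from \eqref{eq:LinfiniL3} to get concentration at scale $\|g_h\|_{L^2}h^{-\varepsilon}$, then add back the estimate on $|\E[Z]|$. The only genuine difference is the concentration tool: the paper applies the Marcinkiewicz--Zygmund inequality to get $\E[|Z-\E[Z]|^p]\leq C_p\|g_h\|_{L^2}^p$ for every $p$ and then Markov's inequality, whereas you apply Hoeffding's inequality to the real and imaginary parts. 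Your route gives a stronger (sub-Gaussian, $\exp(-ch^{-2\varepsilon})$) tail, at the price of needing the summands to be bounded by deterministic constants --- a point you correctly address by observing that $|Z_j|=|s_j\hat b_{j,0}(t,x)|$ is non-random, only the phase $e^{\frac{i}{h}\phi_{j,t,\delta}(x)}$ depending on $\omega$; the moment-based route only needs the $\ell^2$ control of the increments and is marginally more robust, but for the stated $\mO(h^\infty)$ conclusion the two are interchangeable.
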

\begin{proof}
Recall that, thanks to (\ref{eq:LinfiniL3}), 
$\sum_{j\in J_h} |Z_j|^2 \leq  C  \|g_h\|_{L^2}^2$, 
 with $C$ independent of $h$ and $\omega$.
For any $p\in \N$, the Marcinkiewicz-Zygmund inequality implies that 
\begin{equation*}
	\erw [|Z - \E[Z]|^p] \leq C_p \|g_h\|_{L^2}^p 
	 .
\end{equation*}
In particular, using Markov's inequality, we deduce that, with probability 
$1-\mO(h^\infty)$, we have
\begin{equation*}
	|Z - \E[Z]| 
	\leq C \|g_h\|_{L^2} h^{-\varepsilon} 
	.
\end{equation*}
The result follows.
\end{proof}
\subsection{Proof of Theorem \ref{th:Main}}
By Proposition \ref{Prop:DecompLag}, we may write
\begin{equation*}
	\psi_h = \sum_{i\in \mathcal{I}_h} g_{i,h},
\end{equation*}
with $|\mathcal{I}_h|\leq h^{(d-1)(\beta-\varepsilon-1) - \varepsilon}$, and where
\begin{equation*}
	\sum_{i\in \mathcal{I}_h} \|g_{i,h}\|_{L^2}^2 \leq C\|\psi_h\|_{L^2}^{2},
\end{equation*}
with $C$ independent of $h$. We thus have, for any $x\in X$,
\begin{align*}
\left|e^{-i \frac{t}{h} P_h^\delta} \psi_h(x)\right|^2 
	&= \left| \sum_{i\in \mathcal{I}_h}
		e^{-i \frac{t}{h} P_h^\delta} g_{i,h} (x)\right|^2\\
	&\leq |\mathcal{I}_h| \times  \sum_{i\in \mathcal{I}_h}
		\left|e^{-i \frac{t}{h} P_h^\delta} g_{i,h} (x)\right|^2
			~~\text{ by Cauchy-Schwarz}\\
	&\leq h^{(d-1)(\beta-1) - \varepsilon} \times \sum_{i\in \mathcal{I}_h}
		\left|e^{-i \frac{t}{h} P_h^\delta} g_{i,h} (x)\right|^2.
\end{align*}
Now, by Corollary \ref{cor:SmallZ}, there is an event $\Omega_h$ of 
probability $\prob(\Omega_{h})>1-\mO(h^\infty)$ such that, for all 
$\omega\in \Omega_{h}$, we have
\begin{align*}
\left|e^{-i \frac{t}{h} P_h^\delta} \psi_h(x)\right|^2 
	&\leq C h^{(d-1)(\beta -1) - \varepsilon} \times  
	\left( \sum_{i\in \mathcal{I}_h}
	\left\|g_{i,h}\right\|_{L^2}^2 \right) \times 
		 \left( 1 + h^{2\Gamma -  (d-1) (\beta - \varepsilon)} \right) \\
	&\leq  C \|\psi_h\|_{L^2}^2 \left( h^{(d-1)(\beta - \varepsilon-1) 
		- \varepsilon} +  h^{2\Gamma -(d-1)- C \varepsilon}\right)\\
	&\leq C \|\psi_h\|_{L^2}^2   h^{2\Gamma' -(d-1)- C \varepsilon} ,
\end{align*}
with $\Gamma'$ as in (\ref{eq:DefGammaPrime}).
\\
\par
Consider a collection of \emph{mesh points} $(x_n)_{n=1..., N_h}\subset X$ 
with $N_h=N_h(M)$ bounded polynomially in $h^{-1}$, such that any point $x\in X$ 
is at a distance at most $h^{M}$, $M>0$, from some mesh point. By a union 
bound it follows that with probability $1-\mO(h^\infty)$, we have that 
for all $n=1,\dots,N_h$ 
\begin{align*}
\left|e^{-i \frac{t}{h} P_h^\delta} \psi_h(x_n)\right|
	&\leq C h^{\Gamma' -\frac{(d-1)}{2}- C \varepsilon} \|\psi_h\|_{L^2}.
\end{align*}
\par
Since $\psi_h$ is an eigenfunction of $-h^2\Delta_g$, we know 
by elliptic regularity that $\psi_h$ is smooth. By \eqref{eq:SchTyp5}, 
it follows that $e^{-i \frac{t}{h} P_h^\delta} \psi_h \in H^s_h(X)$, 
for any $s\geq 0$, and 
\begin{equation*}
\|e^{-i \frac{t}{h} P_h^\delta} \psi_h \|_{H^s_h(X)}
	\leq \mO(1)\|\psi_h \|_{H^{s}_h(X)}.
\end{equation*}
For any $s\geq0$, standard Sobolev estimates show that for any 
$u\in H^s_h(X)$, $s>d/2$, 
\begin{equation*}
	\|u\|_{L^\infty(X)} 
	\leq \mO_s(1)h^{-d/2}\|u\|_{H^s_h(X)}.
\end{equation*}
Hence, 
\begin{equation*}
\begin{split}
	\|\nabla e^{-i \frac{t}{h} P_h^\delta} \psi_h \|_{L^\infty(X)} 
	&\leq \mO_s(1)h^{-d/2}\|\nabla e^{-i \frac{t}{h} P_h^\delta} \psi_h \|_{H^s_h(X)}\\
	&\leq \mO_s(1)h^{-d/2-1}\|e^{-i \frac{t}{h} P_h^\delta} \psi_h \|_{H^{s+1}_h(X)}\\
	&\leq \mO_s(1)h^{-d/2-1}\| \psi_h \|_{H^{s+1}_h(X)}\\
	&\leq \mO_s(1)h^{-d/2-1}\| \psi_h \|_{L^2(X)},
\end{split}
\end{equation*}
where in the last line we used again elliptic regularity. 
\par
Taking $M>0$ sufficiently large (i.e. a sufficiently fine mesh), 
we deduce that, with probability $1-\mO(h^\infty)$,
\begin{equation*}
	\left\|e^{-i \frac{t}{h} P_h^\delta} \psi_h \right\|_{L^\infty} 
	\leq C h^{\Gamma' -\frac{(d-1)}{2}- C \varepsilon}  \|\psi_h\|_{L^2},
\end{equation*}
and the result follows by taking $\varepsilon$ smaller, concluding 
the proof of Theorem \ref{th:Main}.
\appendix  
\section{Review of semiclassical analysis}\label{sec:Appendix}
In this section we briefly recall basic notions and results from semiclassical 
analysis. We refer the reader to the books \cite{DiSj,Zw12} for a detailed 
introduction to the subject. 

\subsection{Non-stationary phase}
We use several times the following lemma, whose proof is analogous to that 
of \cite[Lemma 3.14]{Zw12}.
\begin{lem}\label{Lem:NonStat}
Let $h\in]0,1]$, let $X$ be a Riemannian manifold, and let $a(\cdot;h) \in C_c^\infty(X)$, 
$\varphi(\cdot;h) \in C^\infty(X)$. 
Suppose that, $a(\cdot; h)$, $\varphi( \cdot ; h)$ are bounded independently 
of $h$, along with all their derivatives, and that the support of $a(\cdot ; h)$ 
is contained in a compact set $K$ independent of $h$. Suppose that there exist $c>0$ and 
$\varepsilon\in ]0,1/2[$ such that, for all $x\in K$ and all $h\in ]0,1]$, 
we have $|d_x \varphi(x; h)| \geq c h^{1/2 - \varepsilon}$. Then, 
\begin{equation*}
	\int_X a(x;h) e^{\frac{i}{h} \varphi(x;h)} dx = \mO(h^\infty).
\end{equation*}
More generally,  if $a$ and $\varphi$ depend on an extra parameter 
 $\lambda>0$, with $|d_x \varphi(x; h)| \geq c\lambda h^{1/2 - \varepsilon}$ 
 but all derivatives of $a$ and $\varphi$ bounded independently of $h$ and 
 $\lambda$, then 
 \begin{equation*}
 	\int_X a(x;h,\lambda) e^{\frac{i}{h} \varphi(x;h,\lambda)} dx 
 		= \mO\!\left(\left(\frac{h^{\varepsilon}}{ \lambda}\right)^\infty\right).
 \end{equation*}
\end{lem}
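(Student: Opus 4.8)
The final statement to prove is Lemma~\ref{Lem:NonStat}, the non-stationary phase lemma. Let me sketch how I'd prove it.

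=== PROOF PROPOSAL ===

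The plan is to use the standard non-stationary phase argument: repeated integration by parts against the first-order differential operator that differentiates the phase. First I would introduce, in local coordinate patches covering the compact support $K$, the transport operator
\[
L := \frac{h}{i}\,\frac{\nabla\varphi}{|\nabla\varphi|^2}\cdot \nabla,
\]
which satisfies $L\big(e^{i\varphi/h}\big) = e^{i\varphi/h}$, and whose formal adjoint (with respect to the coordinate volume) is
\[
L^{t} u = -\frac{h}{i}\,\nabla\cdot\!\left(\frac{\nabla\varphi}{|\nabla\varphi|^2}\,u\right).
\]
Since $|d_x\varphi|\geq c\,h^{1/2-\varepsilon}$ on $K$ and all derivatives of $\varphi$ are bounded independently of $h$, each application of $L^{t}$ to a symbol costs at most a factor $h\cdot (h^{1/2-\varepsilon})^{-2} = h^{2\varepsilon}$ in $C^0$ norm (the worst term being when the derivative hits $|\nabla\varphi|^{-2}$, producing $|\nabla\varphi|^{-3}|\nabla^2\varphi|$, still bounded by $C h^{-2(1/2-\varepsilon)}\cdot h = C h^{2\varepsilon}$ after multiplying by the extra $h$). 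Iterating $N$ times,
\[
\int a\, e^{i\varphi/h}\,dx = \int \big((L^{t})^N a\big)\, e^{i\varphi/h}\,dx = \mO\big(h^{2N\varepsilon}\big),
\]
and since $N$ is arbitrary and $\varepsilon>0$, the right-hand side is $\mO(h^\infty)$. The passage from $\R^d$ to a manifold is handled by a partition of unity subordinate to finitely many charts covering $K$; on each chart the Riemannian volume density is a smooth bounded factor that can be absorbed into $a$.

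For the second, more refined statement with the extra parameter $\lambda$, I would run exactly the same integration-by-parts scheme but keep track of the $\lambda$-dependence. The hypothesis $|d_x\varphi|\geq c\lambda h^{1/2-\varepsilon}$ means each application of $L^{t}$ now costs $h\cdot(\lambda h^{1/2-\varepsilon})^{-2} = \lambda^{-2} h^{2\varepsilon}$ (again the derivative landing on $|\nabla\varphi|^{-2}$ is the dominant term, and uniform bounds on derivatives of $\varphi$ in $\lambda$ ensure the numerators stay bounded). After $N$ iterations this yields
\[
\int a\, e^{i\varphi/h}\,dx = \mO\!\left(\left(\frac{h^{\varepsilon}}{\lambda}\right)^{2N}\right),
\]
and letting $N\to\infty$ gives the stated $\mO\big((h^{\varepsilon}/\lambda)^\infty\big)$ bound; in fact one only needs $h^{\varepsilon}/\lambda$ bounded for this to make sense, which is implicit in the intended application.

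The only mildly delicate point—the ``main obstacle,'' such as it is—is bookkeeping the constants in the iterated application of $L^{t}$: one must check that after $N$ steps the accumulated constant is of the form $C_N$ (depending on $N$ and on finitely many $C^k$-bounds of $a$ and $\varphi$, but not on $h$ or $\lambda$), rather than something that degrades with $N$ in a way that defeats the limit. This is routine: each step contributes a factor bounded by a fixed constant times the relevant small parameter ($h^{2\varepsilon}$, resp.\ $\lambda^{-2}h^{2\varepsilon}$), and the Leibniz rule produces a controlled (polynomial in $N$, hence harmless once absorbed into $C_N$) number of terms. One also checks that the support of $(L^{t})^N a$ stays inside $K$, so compactness is preserved and there are no boundary terms in the integration by parts. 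This is why the proof is ``analogous to that of \cite[Lemma 3.14]{Zw12}'' and can be stated briefly.
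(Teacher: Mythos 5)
Your proposal is correct and is exactly the standard non-stationary-phase argument that the paper itself invokes (the paper gives no proof, only the remark that it is ``analogous to \cite[Lemma 3.14]{Zw12}''): integration by parts with $L=\frac{h}{i}\frac{\nabla\varphi}{|\nabla\varphi|^2}\cdot\nabla$, with the correct accounting that each application of $L^t$ gains $h\,|\nabla\varphi|^{-2}\leq C h^{2\varepsilon}$ (resp.\ $C(h^{\varepsilon}/\lambda)^2$), the inductive structure $h^N|\nabla\varphi|^{-2N}$ handling the accumulating negative powers, and a partition of unity reducing to charts. Nothing further is needed.
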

\subsection{Pseudodifferential calculus} \label{sec:Pseudo}
Let $X$ be a smooth $d$-dimensional Riemannian manifold without boundary. 
Let $T^*X$ denote the cotangent bundle and let $\overline{T}^*X$ denote 
the fiber-radial compactification of $T^*X$, see \cite[Section 2]{Va13} and 
the book \cite[Section E.1.3]{DyZw19}.  
\\
\\
\textbf{Symbol Classes.} For $k\in\R$ and $\eta\in [0,1/2[$ we say that a smooth function 
$a(x,\xi;h)\in C^{\infty}(T^*X\times (0,1])$ lies in the \emph{symbol 
class} $S^k_\eta(X)$ if and only if for all compact sets $K\subset X$ and 
all multiindices $\alpha,\beta\in \N^d$ there exists a constant 
$C_{\alpha,\beta,K}>0$ such that 
\begin{equation}\label{app:SC1}
	\sup_{x\in K}| \partial^\alpha_x \partial_\xi^\beta 
	a(x,\xi;h)| \leq C_{\alpha,\beta,K} h^{-\eta(|\alpha|+|\beta|)}
	\langle\xi\rangle^{k-|\beta|}.
\end{equation}
Here $\langle\xi\rangle:= (1+|\xi|_g^2)^{1/2}$ denotes the ``Japanese 
bracket''. Note that this symbol class is independent of the choice 
of local coordinates. We define a residual class of symbols by 
$S^{-\infty}_\eta(X) := \bigcap_{k\in \R} S^k_\eta(X)$ and
$h^\infty S^{-\infty}_\eta(X)$ by 
\begin{equation*}
	a \in h^\infty S^{-\infty}_\eta(X) \quad \Longleftrightarrow 
	\quad 
	\forall \alpha,\beta \in \N^d , ~K\Subset X, ~N>0:~~
	\partial^\alpha_x \partial_\xi^\beta
	a(x,\xi;h) = \mO_{\alpha,\beta,K,N}(h^N \langle \xi\rangle^{-N}),
\end{equation*}
uniformly when $x$ varies in $K$. 
\\
\par
We will also consider the symbol classes $S^{\comp}_\eta(T^*X)$ consisting 
of compactly supported functions $a(x,\xi;h)\in C^{\infty}(T^*X\times (0,1])$ 
satisfying \eqref{app:SC1}, with a support bounded independently of $h$. 
Hence, $S^{\comp}_\eta(T^*X)\subset S^k_\eta(X)$ for all $k\in\R$. 
When $\eta=0$, then we write $S^{\comp}(T^*X) = S^{\comp}_0(T^*X)$ and 
$S^k(X)=S^k_0(X)$ for simplicity. If $U$ is an open set of $T^*X$, we will 
sometimes write $S^{\comp}_\eta(U)$ for the set of functions $a$ in 
$S^{\comp}_\eta(T^*X)$ such that for any $h\in ]0,1]$, $a$ has its 
support in $U$. 
\\
\\
\textbf{Pseudodifferential operators and quantization.} 
A linear continuous map $R=R_h: \mathcal{E}'(X) \to C^{\infty}(X)$ 
is called \emph{negligible}, or in the \emph{residual class} $h^\infty \Psi^{-\infty}$, 
if its distribution kernel $K_R$ is smooth 
and each of its $C^\infty(X\times X)$ seminorms is $\mO(h^\infty)$, i.e. it satisfies 
\begin{equation*}
	\partial_x^\alpha \partial^\beta_y K_R(x,y)= \mO(h^\infty), 
\end{equation*}
for all $\alpha,\beta\in \N^d$, when expressed in local coordinates.
\\
\par
A linear continuous map $P_h: C_c^{\infty}(X)\to \mathcal{D}'(X)$ is called a 
\emph{semiclassical pseudo-differential operator} belonging to the space 
$\Psi_{h,\eta}^{m}$ if and only if the following two conditions hold: 
\begin{enumerate}
	\item $\phi P_h \psi$ is negligible for all 
	$\phi,\psi \in C^\infty_c(X)$ with $\supp \phi \cap \supp \psi = \emptyset$;
	\item for every cut-off chart $(\kappa,\chi)$ there exists a symbol 
	$p_{\kappa}\in S^m_\eta(T^*\R^d)$ such that
	\begin{equation}\label{app:PDO1} 
		\chi P_h\chi 
		= \chi\kappa^*  \Op_h^\mathrm{w}(p_{\kappa})(\kappa^{-1})^*\chi.
	\end{equation}
\end{enumerate}
Here, $\kappa: X\ni U \to V\subset \R^d$ is a diffeomorphism between open sets 
and $\chi\in C_c^\infty(U)$. We refer to the pair $(\kappa,\chi)$ as a 
\emph{cut-off chart}. In \eqref{app:PDO1} we use the semiclassical Weyl 
quantization of the symbols $p_\kappa$ defined by
\begin{equation}\label{app:PDO2} 
	\Op_h^\mathrm{w}(p_\kappa) u(x) = \frac{1}{(2h\pi)^d} \iint_{\R^{2d}} 
	\e^{\frac{i}{h}(x-y)\cdot \xi} 
	p_\kappa\!\left(\frac{x+y}{2},\xi;h\right)u(y) dy d\xi, 
	\quad u\in C_c^\infty(V),
\end{equation}
seen as an oscillatory integral.
\\
\par
We have the surjective semiclassical principal symbol map 
\begin{equation}\label{app:PDO3} 
	\sigma: \Psi_{h,\eta}^{m}(X) \rightarrow S^m_\eta(X)/h^{1-2\eta}S^{m-1}_\eta(X)
\end{equation}
whose kernel is given by $h^{1-2\eta}\Psi_{h,\eta}^{m-1}$ and its right inverse 
is given by a non-canonical quantization map 
\begin{equation}\label{app:PDO4} 
	\Op_h: S^m_\eta(X)\rightarrow \Psi_{h,\eta}^{m}(X) .
\end{equation} 
Such a (non-intrinsic) quantization map can be defined for example as follows: let 
$(\phi_j,\chi_j)_j$ be a countable family of cut-off charts whose 
domains $U_j$ cover $X$ and such that $\sum_j \chi_j =1$ on $X$. 
Let $\chi_j'\in C_c^\infty(U_j;[0,1])$ be equal to $1$ near 
$\supp \chi_j$. Then, given $a\in S^m_\eta(X)$ we define 
\begin{equation}\label{app:PDO4.1} 
	\Op_h(a):= \sum_j \chi_j'\phi_j^* 
		\Op_h^\mathrm{w}((\widetilde{\phi}_j^{-1})^*(\chi_ja))
		(\phi_j^{-1})^* \chi_j',
\end{equation} 
where $\widetilde{\phi}_j:T^*U_j\to T^*V_j$ is a lift of the 
chart $\phi_j$ defined by $\widetilde{\phi}_j(x,\xi)= \ora{(}(x,(d\phi(x))^{-1}\ora{)^T}\xi)$.   
Notice that $\Op_h(a)$ is in particular properly supported. 
Furthermore, we have that 
\begin{equation}\label{app:PDO5} 
	\Psi_{h,\eta}^{m}(X) = \Op_h(S^m_\eta(X)) +h^\infty\Psi^{-\infty}.
\end{equation} 
We see that any operator in $\Psi_{h,\eta}^{m}(X)$ can represented 
by a properly supported operator in $\Psi_{h,\eta}^{m}(X)$ up to a 
negligible term in $h^\infty\Psi^{-\infty}$.  
\\
\par
Compositions of pseudo-differential operators are pseudo-differential operators 
and we have for properly supported $A\in \Psi_\eta^m(X)$, 
$B\in \Psi_\eta^{m'}(X)$ that $A\circ B \in \Psi_\eta^{m+m'}(X)$ and 
\begin{equation}\label{app:PDO5.1} 
\begin{split}
	\sigma_h(A\circ B) = \sigma_h (A) \sigma_h(B) ,\\
	\sigma_h([A,B]) =-ih\{\sigma_h (A), \sigma_h(B)\},\\
\end{split}
\end{equation}
where $\{\cdot, \cdot\}$ denotes the Poisson bracket with respect to the natural 
symplectic structure on $T^*X$. 
\\
\\
\textbf{Wavefront set.} For $a\in S^{k}_\eta(T^*X)$ we define its essential support 
$\esupp a\subset \overline{T}^*X$ as follows: a point $\rho \in \overline{T}^*X$ is 
not contained in $\esupp a$ if there exists a neighborhood $U$ of $\rho$ in $ \overline{T}^*X$ 
such that for all $\alpha,\beta \in \N^d$, $N>0$ there exists a constant $C_{\alpha,\beta,N}>0$ 
such that 
\begin{equation*}
	| \partial^\alpha_x \partial_\xi^\beta 
	a(x,\xi;h)| \leq C_{\alpha,\eta,N} h^{N}\langle\xi\rangle^{-N}, 
	\quad (x,\xi) \in U\cap T^*X. 
\end{equation*}
For $P_h\in \Psi_{h,\eta}^{m}$ we define the 
\emph{wavefront set} $\WF_h(P_h) \subset \overline{T}^*X$ as follows: 
a point $(x,\xi)\in \overline{T}^*X$ does not lie in $\WF_h(P_h)$ if and only 
if for each cut-off chart $(\phi,\chi)$ such that $x$ is contained in the 
domain of $\phi$, we have that $(\phi(x),\ora{(}(d\phi(x))\ora{^{-1})^{T}}\xi)\notin \esupp p_\kappa$, 
with $p_\kappa$ defined as in \eqref{app:PDO1}. 
\\
\par
If $A\in \Psi_{h,\eta}^{m}$, $B_h \in \Psi_{h,\eta'}^{m'}$ are properly 
supported, then 
\begin{equation}\label{eq:CompoWF}
\WF_h (A B) \subset \WF_h (A) \cap \WF_h (B).
\end{equation}
In particular, if $b \equiv 1$ on $\WF_h(A)$, we have
\begin{equation}\label{eq:MicrolocallyId}
A = \Op_h(b) A + h^\infty \Psi^{-\infty}.
\end{equation}
\par
Compactly supported pseudo-differential operators in $\Psi^m_\eta(X)$ with compact wavefront sets 
in $T^*X$ are called \emph{compactly microlocalized} and we will denote this 
class by $\Psi^\comp_\eta(X)$. Notice that $\Psi^\comp_\eta(X) \subset \Psi^m_\eta(X)$ 
for all $m\in \R$. Furthermore, 
\begin{equation}\label{app:PDO6} 
	\Psi_{h,\eta}^{\comp}(X) = \Op_h(S^\comp_\eta(X)) +h^\infty\Psi^{-\infty}.
\end{equation} 
When we make use of this identity for an $A\in\Psi_{h,\eta}^{\comp}(X)$ 
then we call an $a\in S^\comp_\eta(X)$, such that $A = 
\Op_h(a) +h^\infty\Psi^{-\infty}$, a \emph{full symbol} of $A$ (though this 
notion is not intrinsic and depends on a choice of quantization). 
For $A=\Op_h(a)\in \Psi^{\comp}_h(X)$, we have
\begin{equation}\label{app:PDO6.1} 
	\WF_h(A) = \esupp a,
\end{equation}
noting that this does not depend on the choice of the
quantization. When $K$ is a compact subset of $T^*X$ and $\WF_h(A)\subset K$, 
we will sometimes say that $A$ is \emph{microsupported} inside $K$.
\\
\par 
The principal symbol map
\begin{equation*}
	\sigma_h: \Psi_{h,\eta}^{\comp}(X) \rightarrow S^{\comp}_\eta (X)
	/(h^{1-2\eta}S^{\comp}_\eta (X) + h^\infty S^{-\infty}(X)) 
\end{equation*}
is surjective with kernel given by $h^{1-2\eta}\Psi_{h,\eta}^{\comp}(X)$. 
Thanks to (\ref{eq:CompoWF}), the composition of two compactly 
microlocalized operators is still compactly microlocalized, i.e. 
\begin{equation*}
	A,B\in\Psi_\eta^\comp(X) \quad \Longrightarrow \quad 
	A\circ B \in \Psi_\eta^\comp(X),
\end{equation*}
and \eqref{app:PDO5.1} is still valid. Moreover, the standard composition 
formulas of symbols in $\R^{2d}$, see for instance \cite[Proposition 7.6]{DiSj} 
show also that when $A\in \Psi_0^k(X)$ and $B\in \Psi_\eta^\comp (X)$ then 
$A\circ B \in \Psi_\eta^\comp (X)$ and 
\begin{equation}\label{app:PDO7} 
	\sigma_h(AB) \equiv \sigma_h(A)\sigma_h(B) \quad \mathrm{mod} 
	~~h^{1-\eta}S^{\comp}_\eta(X) + h^{-\infty}S^\infty(X).
\end{equation}
\subsection{Microlocalization}\label{sec:Microloc}
Given the notion of semiclassical wavefront set, it is natural 
to consider operators and their properties \emph{microlocally}. 
Let $X_1,X_2$ be two smooth $d$-dimensional manifolds and 
let $h\in]0,h_0]$. We say that a family of distributions 
$u_h\in \mathcal{D}'(X_1)$ is $h$-tempered if for each $\chi\in C^\infty_c(X_1)$ 
there exist constants $N\geq 0$ and $C>0$ such that 
\begin{equation*}
	\| \chi u_h \|_{H^{-N}_h(X_1)} \leq C h^{-N},
\end{equation*}
where $\| \cdot\|_{H^{s}_h(X_1)}$, $s\in\R$, denotes the 
semiclassical Sobolev norm of order $s$, see e.g. 
\cite[Defintion E.19]{DyZw19}. 
We say that a family of operators $T_h:C_c^\infty(X_2)\to 
\mathcal{D}'(X_1)$ is $h$-tempered if the family of associated 
Schwartz kernels $K_{T_h}\in \mathcal{D}'(X_1\times X_2)$ is 
$h$-tempered. 
\par
For open sets $V\Subset T^*X_1$ and 
$U\Subset T^*X_2$, the operators \emph{defined microlocally} 
near $V\times U$ are given by the equivalence classes of tempered operators 
defined by the following relation: 
$T\sim T'$ if and only if there exists open sets 
$\widetilde{V}\Subset T^*X_1$, $\widetilde{U}\Subset T^*X_2$ with $U\Subset\widetilde{U}$, 
$V\Subset\widetilde{V}$ such that, for any $A\in\Psi_h^\comp(X_1)$, $B\in \Psi_h^\comp(X_2)$ with
$\WF_h(A)\subset\widetilde{V}$, 
$\WF_h(B)\subset\widetilde{U}$, we have
\begin{equation}\label{app:Mloc2} 
	A(T-T')B=\mO(h^\infty)_{\Psi^{-\infty}}.
\end{equation}
\par
For two such operators we say that $T=T'$ \emph{microlocally} near $V\times U$. 
Similarly, we say that $S=T^{-1}$ microlocally near $V\times V$ (and thus, that $S$ is \emph{microlocally invertible} near $V\times V$), if 
$ST=Id$ microlocally near $U\times U$, and $TS=Id$ microlocally near $V\times V$.

\subsection{Quantizing symplectic maps}\label{sec:FIO}
Let $Y, Y'$ be either compact Riemannian manifolds or $\R^d$. 
Let $U\Subset T^*Y ,U'\Subset T^*Y'$ be bounded open sets, and let 
$\kappa : U\to U'$ be a symplectomorphism. We say that an ($h$-dependent)  
operator $T: L^2(Y)\to L^2(Y')$ quantizes $\kappa$ microlocally 
in $U\times \kappa(U)$ if, for any $a\in C_c^\infty(U)$, we have
 \begin{equation}
 T \Op_h(a) = \Op_h(a') T
 ~~\text{microlocally near } U\times\kappa(U), 
 \quad 
 a'	:= a \circ \kappa^{-1} + \mO_{S^0}(h).
 \end{equation}
\\
\textbf{Quantizing coordinate changes.}
As a first example, consider a point $x_0\in X$, and a coordinate 
chart $\varphi : V_{x_0} \to U_{x_0}\subset \R^d$ 
defined in a neighborhood $V_{x_0}$ of $x_0$.  Then $\varphi$ can be 
uniquely lifted to a symplectic map $\kappa_\varphi : T^* V_{x_0} 
\to T^*U_{x_0}\subset\R^{2d}$, which is bijective on its image.
Let $V'_{x_0}\Subset V_{x_0}$ be an open 
set containing $x_0$, and let $\chi\in C_c^\infty(T^*V_{x_0})$ 
with $\chi\equiv 1$ on $V'_{x_0}$. Then the map 
$T_\varphi: L^2(X) \to L^2(\R^d)$ given by 
\begin{equation}\label{eq:CoordChange}
	T_\varphi f=  (\kappa_\varphi^{-1})^*(\chi f)
\end{equation}
is a quantization of $\kappa$ in $V'_{x_0}\times \kappa (V'_{x_0})$. Note also that 
$T_\varphi: C^\infty(X) \to C^\infty_c(\R^d)$ and that $T_\varphi=\mO_s(1): 
H_h^s(X)\to H^s(\R^d)$, $s\in\R$. Similarly, we can 
consider $S_\varphi:L^2(\R^d)\to L^2(X)$ defined by 
$S_\varphi f = \chi (\kappa_\varphi)^*f$. One can easily 
check that this map has the similar properties. 
We refer to \cite{Zw12} for more details.  
\\
\\
\textbf{Quantizing symplectomorphisms locally.} 
Following \cite[Section 4.1]{NZ}, we consider the case $Y=Y'=\R^d$, with 
symplectomorphisms defined from graphs.  Namely, we suppose that $\kappa$ 
is a symplectomorphism defined in a neighborhood of the origin, with 
$\kappa(0,0)=(0,0)$, and such that 
\begin{equation}\label{eq:SymplectoGraph}
T^*\mathbb{R}^d\times T^*\mathbb{R}^d \ni (x^1,\xi^1; x^0,\xi^0)
	\mapsto (x^1,\xi^0)\in \mathbb{R}^d\times \mathbb{R}^d,~~ (x^1,\xi^1)
	= \kappa(x^0,\xi^0),
\end{equation}
is a diffeomorphism near the origin. Note that this is equivalent to asking that
\begin{equation}\label{eq:Block}
\text{the } n\times n \text{ block } (\partial x^1/\partial x^0) 
\text{ in the tangent map } d\kappa(0,0) \text{ is invertible}.
\end{equation}
It then follows that there exists a unique function 
${\psi}\in C^\infty(\mathbb{R}^d\times \mathbb{R}^d)$ such that for 
$(x^1,\xi^0)$ near $(0,0)$,
\begin{equation*}
\kappa(\partial_\xi\psi(x^1,\xi^0),\xi^0)
	=(x^1,\partial_x\psi(x^1,\xi^0)),
	~~ \det \partial^2_{x,\xi}\psi\neq 0 \text{ and } \psi(0,0)=0.
\end{equation*}
The function $\psi$ is said to \emph{generate} the transformation $\kappa$ near $(0,0)$.
\par
Consider a function $\alpha\in S^{\comp}(\mathbb{R}^{2d})$.
Then the operator $T : L^2(\R^d) \to L^2(\R^d)$ given by
\begin{equation}\label{eq:LocalFIO} 
T u(x^1):= \frac{1}{(2\pi h)^d}\iint_{\mathbb{R}^{2n}} e^{i(\psi(x^1,\xi^0)
	-\langle x^0,\xi^0 \rangle/h} \alpha (x^1,\xi^0;h) 
	u(x^0) dx d x^0 \ora{d}\xi^0,
\end{equation}
is a quantization of $\kappa$, and is microlocally invertible in a 
neighborhood of $(0,0)\times (0,0)$, provided $\alpha(x^1,\xi^0;h) = 
|\det \psi_{x,\xi}''(x^1, \xi^0)|^{1/2} + \mO(h)$ close to $(0,0)$. 
Furthermore, any quantization of $\kappa$ that is microlocally invertible 
in a neighborhood of $(0,0)\times (0,0)$ is of the form (\ref{eq:LocalFIO}), 
microlocally in a neighborhood of $(0,0)\times (0,0)$. More precisely, 
there exists an operator $S:L^2(\R^d) \to L^2(\R^d)$ of the form 
\eqref{eq:LocalFIO} quantizing $\kappa^{-1}$, such that 
\begin{equation}\label{eq:LocalFIO2} 
	ST = I, \quad TS = I, \quad \text{microlocally near } 
	(0,0)\times (0,0). 
\end{equation}

An operator of the form (\ref{eq:LocalFIO}) is called an $h$-Fourier Integral 
Operator associated to $\kappa$. Moreover, an operator $T$ of the 
form \eqref{eq:LocalFIO} satisfies 
\begin{equation*}
	T=\mO_{s}(1):~H_h^s(\R^d)\to H_h^s(\R^d), \quad s\in\R,
\end{equation*}
and for any $b\in S^\comp (\R^d)$ we have that 
\begin{equation}\label{eq:LocalFIO3} 
	S\Op_h^\mathrm{w}(b)T = \Op_h^\mathrm{w}(c) + \mO(h^\infty)_{\Psi^{-\infty}}
\end{equation}
for some $c\in S^\comp (\R^d)$, with $\WF_h(S\Op_h^\mathrm{w}(b)T ) \subset 
\kappa^{-1}(\WF_h(\Op_h^\mathrm{w}(b)))$. The principal symbol of $c$ satisfies 
$c_0 = \kappa^* b_0$ near $(0,0)$.  
\\
\\
Let $X$ be a compact smooth Riemannian manifold, let $\rho_0\in T^*X$, 
let $U\subset T^*X$ be an open neighborhood of $\rho_0$, and let 
$U'\subset T^*\R^d$ be an open neighborhood of $(0,0)\in T^*\R^d$. 
Suppose that $\kappa:U\to U'$ is a symplectomorphism such that 
$\kappa(\rho_0)=(0,0)$. Upon potentially shrinking $U$ we may suppose 
that it is in the domain of some local coordinate chart 
$\varphi:U\to V\subset T^*\R^d$. Then $\widetilde{\kappa} = \kappa\circ 
\varphi^{-1}$ is a symplectomorphism $V\to U'$. Let $\widetilde{T}$ 
be $h$-Fourier integral operator of the form \eqref{eq:LocalFIO} quantizing 
$\widetilde{\kappa}$, let $\widetilde{S}$ be as in \eqref{eq:LocalFIO2}, 
and let $T_\varphi$ be as in \eqref{eq:CoordChange}, Then 
\begin{equation}
	T := \widetilde{T}\circ T_\varphi: ~L^2(X)\to L^2(\R^d)bl
\end{equation}
quantizes $\kappa$ microlocally near $(0,0)\times \rho_0$. Furthermore, 
$S:=S_\varphi \circ \widetilde{S}$ is a microlocal inverse of $T$ 
near $(0,0)\times (0,0)$. In fact, $T$ is an $h$-Fourier integral 
operator associated with $\kappa$, and $S$ is an 
$h$-Fourier integral operator associated with $\kappa^{-1}$. 
Moreover, $T=\mO_{s}(1):H_h^s(X)\to H_h^s(\R^d)$ and 
$S=\mO_{s}(1):H_h^s(\R^d)\to H_h^s(X)$, and for any 
$b\in S^\comp (X)$ we have that 
\begin{equation}\label{eq:LocalFIO3b} 
	S\Op_h(b)T = \Op^{\mathrm{w}}_h(c) + \mO(h^\infty)_{\Psi^{-\infty}}
\end{equation}
for some $c\in S^\comp (X)$, with $\WF_h(S\Op_h^\mathrm{w}(b)T ) \subset 
\kappa^{-1}(\WF_h(\Op_h^\mathrm{w}(b)))$. The principal symbol of $c$ satisfies 
$c_0 = \kappa^* b_0$ near $(0,0)$. 
\\
\\
\textbf{Action of Fourier integral operators on Lagrangian states.} 
Now, let us state a lemma which was proven in \cite[Lemma 4.1]{NZ}, and which 
describes the effect of a Fourier integral operator of the form (\ref{eq:LocalFIO}) 
on a Lagrangian distribution which projects on the base manifold without caustics.
\begin{lem}\label{lem:FIOonLag}
Consider a Lagrangian $\Lambda_0=\{(x_0,\phi_0'(x_0)); x\in \Omega_0\},
\phi_0\in C_b^\infty(\Omega_0)$, contained in a small neighborhood of the origin
$V\subset T^*\mathbb{R}^d$, a symplectomorphism $\kappa$ and a function $\psi$ 
such that $\kappa$ is generated by $\psi$ near $V$. We assume that
\begin{equation*}
\kappa(\Lambda_0)=\Lambda_1 = \{(x,\phi_1'(x)); x\in
\Omega_1\},~~\phi_1\in C_b^\infty(\Omega_1).
\end{equation*}
Then, for any symbol $a\in S^{\comp}(\Omega_0)$,
the application of a Fourier integral operator $T$ of the form (\ref{eq:LocalFIO}) 
to the Lagrangian state
\begin{equation*}
a(x) e^{i\phi_0(x)/h}
\end{equation*}
associated with $\Lambda_0$ can be expanded, for any $L > 0$, into
\begin{equation}\label{eq:FIOOnLagrangian}
T (a e^{i\phi_0/h})(x) = e^{i\phi_1(x)/h} \Big{(} \sum_{j=0}^{L-1} b_j(x)
h^j+ h^L r_L(x,h) \Big{)},
\end{equation}
where $b_j\in S^{comp}$, and for any $\ell\in \mathbb{N}$, we have
\begin{equation*}
\begin{aligned}
\|b_j\|_{C^\ell(\Omega_1)}&\leq C_{\ell,j}
\|a\|_{C^{\ell+2j}(\Omega_0)},~~~~0\leq j\leq L-1,\\
\|r_L(\cdot,h)\|_{C^\ell(\Omega_1)}&\leq C_{\ell,L}
\|a\|_{C^{\ell+2L+d}(\Omega_0)}.
\end{aligned}
\end{equation*}
The constants $C_{\ell,j}$ depend only on $\kappa$, $\alpha$ and
$\sup_{\Omega_0} |\partial^\beta \phi_0|$ for $0<|\beta|\leq 2\ell +j$.
Furthermore, if we write $g :\Omega_1\ni x \mapsto g(x):= \pi\circ \kappa^{-1} 
(x,\phi_1'(x))\in \Omega_0$, the principal symbol $b_0$ satisfies
\begin{equation*}
b_0(x^1)= e^{i\theta/h}\frac{\alpha_0(x^1,\xi^0)}{|\det \psi_{x,\xi}(x^1,\xi^0)|^{1/2}}
	|\det dg(x^1)|^{1/2} a\circ g(x^1),
\end{equation*}
where $\xi_0= \phi_0'\circ g(x^1)$ and where $\theta\in \mathbb{R}$.
\end{lem}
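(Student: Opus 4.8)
The plan is to prove the lemma by a direct application of the stationary phase method with parameters to the oscillatory integral defining $T(ae^{i\phi_0/h})$. Substituting $u = ae^{i\phi_0/h}$ into \eqref{eq:LocalFIO} gives
\[
T(ae^{i\phi_0/h})(x^1) = \frac{1}{(2\pi h)^d}\iint_{\R^{2d}} e^{i\Psi(x^1,x^0,\xi^0)/h}\,\alpha(x^1,\xi^0;h)\,a(x^0)\,dx^0\,d\xi^0,
\]
where $\Psi(x^1,x^0,\xi^0) := \psi(x^1,\xi^0) - \langle x^0,\xi^0\rangle + \phi_0(x^0)$ and $x^1$ plays the role of a parameter. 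Since $\alpha\in S^\comp(\R^{2d})$ and $a\in S^\comp(\Omega_0)$ are compactly supported, this is an absolutely convergent integral over a fixed compact set, so no regularization is needed.

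The first step is to locate the critical set of $\Psi$ in $(x^0,\xi^0)$. The equations $\partial_{x^0}\Psi = 0$, $\partial_{\xi^0}\Psi = 0$ read $\xi^0 = \phi_0'(x^0)$ and $x^0 = \partial_\xi\psi(x^1,\xi^0)$. Combining these with the generating-function identity $\kappa(\partial_\xi\psi(x^1,\xi^0),\xi^0) = (x^1,\partial_x\psi(x^1,\xi^0))$ and the hypothesis $\kappa(\Lambda_0)=\Lambda_1$, one checks that there is a unique critical point, that it depends smoothly on $x^1$, and that it is given by $x^0 = g(x^1)$, $\xi^0 = \phi_0'(g(x^1))$ with $g(x^1) = \pi\circ\kappa^{-1}(x^1,\phi_1'(x^1))$ — here the assumption that $\Lambda_0$ (hence $\Lambda_1$) projects diffeomorphically onto the base is precisely what furnishes uniqueness and smoothness. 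Differentiating the critical value in $x^1$ and using $\xi^0 = \phi_0'(x^0)$ together with $\partial_\xi\psi(x^1,\xi^0) = x^0$ at the critical point shows that $\Psi$ evaluated there equals $\phi_1(x^1) + \theta$ for a constant $\theta\in\R$; this produces the overall phase $e^{i\phi_1(x^1)/h}$, the residual constant being carried by the $e^{i\theta/h}$ factor in the formula for $b_0$.

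The second step is to verify nondegeneracy of the Hessian
\[
\partial^2_{(x^0,\xi^0)}\Psi = \begin{pmatrix} \phi_0''(x^0) & -\Id \\ -\Id & \partial^2_{\xi\xi}\psi(x^1,\xi^0)\end{pmatrix},
\]
whose invertibility follows from $\det\psi''_{x,\xi}\neq 0$ and the graph hypothesis \eqref{eq:Block}, and to express its determinant and signature through $\det\psi''_{x,\xi}$ and $\det dg(x^1)$. Plugging this into the stationary phase expansion with parameter $x^1$ yields \eqref{eq:FIOOnLagrangian}: the leading term gives $b_0$ in the stated closed form (the Jacobian factor $|\det dg(x^1)|^{1/2}$ coming from the change of variables carrying the critical point to $g(x^1)$, and the constant Maslov-type phase from the signature being absorbed into $\theta$), while the higher coefficients $b_j$ and the remainder $r_L$ are read off from the standard asymptotic series. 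The $C^\ell$-bounds come from the quantitative remainder estimates in stationary phase: the coefficient of $h^j$ involves $2j$ derivatives of the amplitude $\alpha(x^1,\cdot)\,a(\cdot)$, while the order-$L$ remainder costs $2L+d$ derivatives (the extra $d$ from the dimension count in the remainder bound), and the constants depend only on $\kappa$, $\alpha$ and finitely many derivatives of $\phi_0$, as claimed.

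I expect the main obstacle to be the bookkeeping needed to pin down the principal symbol $b_0$ in the precise closed form stated — i.e.\ tracking how the Jacobian and Maslov factors from the stationary phase formula recombine — together with making the count of derivatives of $a$ entering each term of the expansion uniform; neither point is conceptually deep, but both demand care. This is exactly the content of \cite[Lemma 4.1]{NZ}, to which one may simply refer.
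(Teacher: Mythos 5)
Correct, and the same in substance as the paper's treatment: the paper offers no proof of this lemma and simply cites \cite[Lemma 4.1]{NZ}, which is precisely the stationary-phase argument you sketch and the reference you ultimately invoke. One minor imprecision: the nondegeneracy of the Hessian of $\Psi$ in $(x^0,\xi^0)$ at the critical point is equivalent to $\kappa(\Lambda_0)$ being transverse to the vertical fibers, i.e.\ to $\Lambda_1$ being a graph — a stated hypothesis of the lemma — and does not follow from $\det\psi''_{x,\xi}\neq 0$ and \eqref{eq:Block} alone, though you do invoke the correct hypothesis in the critical-point analysis of the same step.
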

\subsection{An exotic version of Egorov's Theorem}\label{sec:egorov}
To finish, we present a version of Egorov's theorem for symbols in an 
exotic symbol class. This theorem, which relates 
the evolution of quantum and classical observables, is usually stated when 
the Hamiltonian generating the classical evolution is independent of $h$, 
see for instance \cite[Theorem 11.1]{Zw12}. 
In our case the Hamiltonian \eqref{eq:Hamiltonian} depends on $h$, and so 
we need this more general version. 
\\
\par
Let $X$ be a compact smooth Riemannian manifold, and recall that the 
operator $P_h^\delta$ given in (\ref{eq:SchroedingerOp}) is 
of the form
%
%
\begin{equation*}
	P_h^\delta := -\frac{h^2}{2} \Delta + \delta Q, \quad 0\leq \delta \ll 1,
\end{equation*}
with $Q\in \Psi^{-\infty}_\beta(X)$ self-adjoint. The operator $P_h^\delta$ 
does thus have full symbol $p(x,\xi;\delta)= \frac{1}{2}|\xi|_g^2 + \delta q$, 
$q\in S_\beta^{-\infty}(T^*X)$. 

Recall from \eqref{eq:CondBeta} that we assume that there exists 
$0<\varepsilon_0< \frac{1}{4}$ and $h_0>0$ such that for all $h\leq h_0$, we have
\begin{equation}\label{eq:DeltCondition}
\delta h^{-2\beta - \varepsilon_0} \leq 1.
\end{equation}
By the Kato-Rellich theorem we know that the operator $P_h^\delta$ is selfadjoint 
with domain $H^2_h(X)$. By Stone's Theorem (see for instance \cite[Theorem C.13]{Zw12}),
we then know that it induces a strongly continuous unitary group
\begin{equation}\label{eq:SchTyp3}
	U_\delta(t)= \e^{-i\frac{t}{h}P_h^\delta} =\mO(1): ~L^2(X) \to L^2(X), \quad 
	t\in \R. 
\end{equation}
Since the Laplacian $-\Delta_g$ is a positive elliptic second order 
differential operator on $X$,  we can equip the Sobolev 
space $H^k_h(X)$ with the norm $\|(1-h^2\Delta_g)^{k/2} f\|_{H^0}$. 
Since $P_h^\delta$ and $U_\delta(t)$ commute, we see that 
\begin{equation}\label{eq:SchTyp4}
\begin{split}
	\| (1-h^2\Delta_g)U_\delta(t)f \|_{H_h^{k}} 
	&\leq \|U_\delta(t)(1+P_h^\delta)f \|_{H_h^{k}} + \|\delta Q f\|_{H_h^{k}} 
	\\
	&\leq \|U_\delta(t)(1+P_h^\delta)f \|_{H_h^{k}} 
		+ \mO_k(\delta)\|f\|_{H_h^{0}} 
	\\
	&\leq \|U_\delta(t)(1-h^2\Delta_g)f \|_{H_h^{k}} 
	+\|U_\delta(t)\delta Q f \|_{H_h^{k}} 
		+ \mO_k(\delta)\|f\|_{H_h^{0}},
\end{split}
\end{equation}
where in the last line we used that since $Q$ is compactly microlocalized, 
it follows that $Q=\mO_N(1):H^{-N}_h\to H^{N}_h$. When $k=0$, then 
\eqref{eq:SchTyp3} and \eqref{eq:SchTyp4} yield that 
$U_\delta(t)=\mO(1): ~H^2_h(X) \to H^2_h(X)$. Iterating this argument, 
we obtain that $U_\delta(t)=\mO_n(1): ~H^{2n}_h(X) \to H^{2n}_h(X)$, and we 
deduce by duality and interpolation that 
\begin{equation}\label{eq:SchTyp5}
	U_\delta(t)=\mO_s(1): ~H^{s}_h(X) \to H^{s}_h(X), \quad s\in \R. 
\end{equation}
\par

Recall from \eqref{eq:HamiltonianVF} and \eqref{eq:HamiltonianFlow} that $H_p$ denotes the Hamilton vector flow induced 
by $p$, and that  $\Phi_\delta^t$ denotes the associated Hamilton flow.
Let $K\subset T^*X$ be 
a compact set and let $T>0$, then there exists a compact 
set $K_T\subset T^*X$, independent of $h$ and $\delta$, such that $\Phi^t_\delta (K)\subset K_T$ for all 
$t\in [-T,T]$. For each $k$, fix a norm 
$\|\cdot \|_{C^k(U;K_T)}$, by covering $U$ and $K_T$ in finitely many local 
coordinate charts, for the space $C^k(U;K_T)$ of $k$ times continuously 
differentiable functions on $U$ with values in $K_T$. Then, by mimicking 
the proof of \cite[Lemma 4.1]{IngVog} for finite times, we find that for 
each $k\in\N$ 
\begin{equation}\label{eq:SchTyp6}
	\|\Phi^t_\delta  \|_{C^k(U;K_T)} \leq \mO_{k,T}(1)(1+\delta h^{-\beta(k+1)})
\end{equation}
uniformly in $t\in [-T,T]$.
%
%

\begin{prop}\label{app:prop.Egorov}
	Let $\beta \in[0,1/2[$. Then for each $t\in\R$ and each $A\in \Psi_\beta^\comp(X)$ 
	there exists a $A^t_\delta \in \Psi_\beta^\comp(X)$, such that 
	\begin{equation}\label{app:eg0}
		U_\delta(-t)AU_\delta(t) = A^t_\delta + h^\infty \Psi^{-\infty}.
	\end{equation}
	Moreover, $\WF_h(A^t_\delta)\subset \Phi^t_\delta(\WF_h(A))$ and 
	$\sigma(A^t_\delta) = \sigma(A)\circ \Phi^t_\delta + \mO(h^{1-2\beta})
	\in S^\comp_\beta(X)$. 
\end{prop}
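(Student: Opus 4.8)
The plan is to run the usual Egorov scheme — transport the symbol of $A$ along the Hamiltonian flow $\Phi^t_\delta$ generated by $p_\delta$ and then correct it order by order — while keeping track of everything in the exotic class $S^\comp_\beta$, which is legitimate since $2\beta<1$.

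First I would reduce to the case $A=\Op_h(a)$ for an honest full symbol $a\in S^\comp_\beta(X)$: this is allowed because, by \eqref{app:PDO6}, a general $A\in\Psi^\comp_\beta(X)$ differs from such an operator by an element of $h^\infty\Psi^{-\infty}$, and conjugating such an element by the unitaries $U_\delta(\pm t)$ keeps it in $h^\infty\Psi^{-\infty}$ by \eqref{eq:SchTyp5}. Setting $A(t):=U_\delta(-t)AU_\delta(t)$, which solves the Heisenberg equation $\partial_t A(t)=\tfrac{i}{h}[P_h^\delta,A(t)]$ with $A(0)=A$, the goal becomes the construction of a symbol $b(t;h)\in S^\comp_\beta(X)$ with $b(0)=a+\mO(h^\infty)$ and
\begin{equation*}
\partial_t\Op_h(b(t))-\tfrac{i}{h}\big[P_h^\delta,\Op_h(b(t))\big]=\mO(h^\infty)_{\Psi^{-\infty}}.
\end{equation*}
The leading term will be $b_0(t):=a\circ\Phi^t_\delta$, the solution of the transport equation $\partial_t b_0=H_{p_\delta}b_0$, $b_0(0)=a$ (the $\delta q$ part of the Hamiltonian must be kept in this principal transport: by \eqref{eq:DeltCondition} it produces only an $\mO(h^{\varepsilon_0})$ perturbation of the geodesic vector field, not an $\mO(h^\infty)$ one). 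Checking that $b_0(t)\in S^\comp_\beta(X)$ is the first place the hypotheses enter: by \eqref{eq:SchTyp6} together with \eqref{eq:DeltCondition} one has $\|\Phi^t_\delta\|_{C^k}=\mO(1+\delta h^{-\beta(k+1)})=\mO(h^{-\beta(k-1)})$ for $k\ge1$, and then the Faà di Bruno formula combined with $\partial^\alpha a=\mO(h^{-\beta|\alpha|})$ gives $\partial^\alpha(a\circ\Phi^t_\delta)=\mO(h^{-\beta|\alpha|})$, with support contained in $\Phi^t_\delta(\esupp a)$.

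Next I would build the corrections. Using the composition calculus (the asymptotic expansion underlying \eqref{app:PDO5.1}, \eqref{app:PDO7}; see \cite[Proposition 7.6]{DiSj}), applied to $P_h^\delta$ whose symbol is $\tfrac12|\xi|_g^2+\delta q+\mO(h)$ with $\tfrac12|\xi|_g^2\in S^2$ and $q\in S^{-\infty}_\beta$, one gets for any $b\in S^\comp_\beta$
\begin{equation*}
\tfrac{i}{h}\big[P_h^\delta,\Op_h(b)\big]=\Op_h\big(H_{p_\delta}b+\mathcal{R}(b)\big)+\mO(h^\infty)_{\Psi^{-\infty}},
\end{equation*}
where the cancellation of the even-order Moyal terms in the commutator forces $\mathcal{R}(b)$ to consist of terms carrying at least three extra derivatives (on $b$, and on $q$ for the $\delta q$ part) or the $\mO(h)$ subprincipal part of the Laplacian. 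Using $\beta<1/2$ and \eqref{eq:DeltCondition} one verifies that $\mathcal{R}$ maps $h^sS^\comp_\beta(X)$ into $h^{s+c_0}S^\comp_\beta(X)$ for a fixed exponent $c_0=c_0(\beta,\varepsilon_0)>0$ which moreover satisfies $c_0\ge 1-2\beta$. I would then define $b_j(t)$, $j\ge1$, recursively as the Duhamel solution of $\partial_t b_j=H_{p_\delta}b_j+\mathcal{R}(b_{j-1})$, $b_j(0)=0$, so that $b_j(t)\in h^{jc_0}S^\comp_\beta(X)$ with support in $\Phi^t_\delta(\esupp a)$ and, by a telescoping computation, the partial sums $\sum_{j\le N}b_j$ solve the displayed equation modulo $\Op_h(\mathcal{R}(b_N))=\mO(h^{(N+1)c_0})$. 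Borel-resumming, $b(t)\sim\sum_{j\ge0}b_j(t)$ defines an element of $S^\comp_\beta(X)$ with $b(0)=a+\mO(h^\infty)$ and the required $\mO(h^\infty)_{\Psi^{-\infty}}$ defect. I expect this bookkeeping — keeping every iterate inside $S^\comp_\beta$ while guaranteeing a strictly positive gain $c_0$ at each step despite the $h^{-\beta}$ cost of each differentiation — to be the main obstacle; it is precisely here that $\beta<1/2$ and the smallness \eqref{eq:DeltCondition} of $\delta$ (needed to tame the $\delta q$ contributions) are essential.

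Finally, with $A^t_\delta:=\Op_h(b(t))\in\Psi^\comp_\beta(X)$ I would conclude by a Gronwall/Duhamel argument. Put $E(t):=A(t)-A^t_\delta$; then $E(0)=\mO(h^\infty)_{\Psi^{-\infty}}$ and $\partial_t E(t)-\tfrac{i}{h}[P_h^\delta,E(t)]=G(t)$ with $G(t)=\mO(h^\infty)_{\Psi^{-\infty}}$, whence
\begin{equation*}
U_\delta(t)E(t)U_\delta(-t)=E(0)+\int_0^t U_\delta(s)G(s)U_\delta(-s)\,ds.
\end{equation*}
Since $U_\delta(\pm s)=\mO_s(1):H^s_h(X)\to H^s_h(X)$ for all $s$ by \eqref{eq:SchTyp5}, the right-hand side maps $H^{-N}_h\to H^N_h$ with norm $\mO(h^\infty)$ for every $N$, hence belongs to $h^\infty\Psi^{-\infty}$, and conjugating back gives $E(t)\in h^\infty\Psi^{-\infty}$, i.e.\ \eqref{app:eg0}. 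The inclusion $\WF_h(A^t_\delta)\subset\Phi^t_\delta(\WF_h(A))$ follows because $\esupp b(t)\subset\Phi^t_\delta(\esupp a)=\Phi^t_\delta(\WF_h(A))$, and the principal symbol statement follows because $b(t)-b_0(t)\in h^{c_0}S^\comp_\beta\subset h^{1-2\beta}S^\comp_\beta$, so that $\sigma(A^t_\delta)=a\circ\Phi^t_\delta=\sigma(A)\circ\Phi^t_\delta$ modulo $h^{1-2\beta}S^\comp_\beta(X)$.
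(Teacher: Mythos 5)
Your proposal is correct and follows essentially the same route as the paper: transport of the full symbol along $\Phi^t_\delta$ (with membership in $S^\comp_\beta$ checked via \eqref{eq:SchTyp6} and Fa\`a di Bruno), iterative corrections gaining $h^{1-2\beta}$ per step produced by a Duhamel formula along the flow, Borel summation, and a final conjugation/integration argument using \eqref{eq:SchTyp5} to show the defect lies in $h^\infty\Psi^{-\infty}$. The paper's proof is exactly this scheme, so no further comparison is needed.
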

\begin{proof}
We will mimic the proof of \cite[Theorem 11.1]{Zw12}. 
\\
\\
1. 
Let $a\in S_\beta^\comp$ be the full symbol of $A$ and define 
\begin{equation}\label{app:eg1}
	a^t:= a \circ \Phi^t_\delta.
\end{equation}
Let $t\in [-T,T]$, and notice that $a^t$ is compactly supported in 
$\Phi^{-t}_\delta(\supp a)$. Since $\Phi^t_\delta$ is a family of 
diffeomorphisms depending smoothly on $t$, there exists a compact set 
$K\subset T^*X$ such that $\Phi^{-t}_\delta(\supp a)\subset K$ for all 
$t\in [-T,T]$.

Recall that the derivatives of the composition of a smooth functions 
$f\in C^\infty(\R^d)$ and $g\in C^\infty(\R^d;\R^d)$ are given by the 
generalized F\`aa di Bruno formula 
\begin{equation}\label{app:eg2}
	\partial^\alpha ( f\circ g ) = \sum_{\alpha_\ell, j} c_{\alpha,j} 
		(\partial_{x_{j_1}\cdots x_{j_m}} f)\circ g 
	\cdot
	\prod_{\ell=1}^m\partial^{\alpha_\ell}\psi_{j_\ell}, \quad \alpha \in \N^d, 
\end{equation}
where $c_{\alpha,j}$ are constants $j_\ell\in\{1,\dots,d\}$ and 
$\alpha_1,\dots,\alpha_m$ are multiindices whose sum is equal to 
$\alpha$. Since $a\in S_\beta^\comp(X)$, it follows from \eqref{eq:SchTyp6} 
and \eqref{app:eg2}  that $a^t\in S^\comp_\beta(X)$. 
\\
\\
2. Set $B(t):=U_\delta(-t)AU_\delta(t)$ and define a family of 
operators $B_k(t)$, $k\in \N$, iteratively, as follows: First, put
$B_0(t)=\Op_h(a^t) \in \Psi_\beta^\comp$. Then, 
\begin{equation}\label{app:eg3}
	[P_\delta,B_0(t)] = [P_0,B_0(t)] + \delta [Q,B_0(t)]
	=\frac{h}{i}\Op_h(\{p,a^t\}) 
	- \wh{E}_0(t) - \wt{E}_0(t),
\end{equation}
where $\wh{E}_0(t) = [P_0,B_0(t)]  - \frac{h}{i}\Op_h(\{p,a^t\})   \in h^{2(1-\beta)}\Psi_\beta^\comp$ and 
$\wt{E}_0(t) = \delta [Q,B_0(t)]\in \delta h^{2(1-2\beta)}\Psi_\beta^\comp$ by 
\eqref{app:PDO5.1} and \eqref{app:PDO7}. By \eqref{eq:DeltCondition}, 
it follows that $E_0(t):=\wh{E}_0(t) + \wt{E}_0(t)\in 
h^{2(1-\beta)}\Psi_\beta^\comp$. Notice that $E_0(t)$ depends smoothly 
on $t$ since this is the case for $B_0(t)$ and $a^t$. 
Let $e_0(t)\in h^{2(1-\beta)}S_\beta^\comp$ be the full symbol of $E_0(t)$. 
Since $\partial_t a^t = H_p a^t$, it follows from \eqref{app:eg3} that
\begin{equation}\label{app:eg3.1}
	hD_t B_0(t) = [P_h^\delta,B_0(t)] + E_0(t).
\end{equation}

Next define a sequence of symbols $e_k(t)\in h^{k(1-2\beta)+2(1-\beta)}S_\beta^\comp$, 
$c_{k+1}(t)\in h^{(k+1)(1-2\beta)}S_\beta^\comp$, $k\in\N$, depending smoothly 
on $t$, with $e_0(t)\in h^{2(1-\beta)}S_\beta^\comp$ as above and 
\begin{equation}\label{app:eg4}
	c_{k+1}(t) = \frac{i}{h}\int_0^t (\Phi^{t-s}_\delta)^*e_k(s)ds, 
\end{equation}
where $e_{k+1}(t)$ is the full symbol of $E_{k+1}(t)\in h^{(k+1)(1-2\beta)
+2(1-\beta)}\Psi_\beta^\comp$ defined via the relation 
\begin{equation}\label{app:eg5}
\begin{split}
	hD_t \Op_h(c_{k+1}(t)) 
	&= \frac{h}{i}\Op_h(\{p,c_{k+1}(t)\}) +\Op_h(e_k(t)) \\
	&= [P^\delta_h,\Op_h(c_{k+1}(t)) ] - E_{k+1}(t) +\Op_h(e_k(t)).
\end{split}
\end{equation}
Here, we used the fact that $hD_t c_{k+1}(t) = H_p c_{k+1}(t) + e_k(t)$ 
in the first line, and \eqref{app:PDO5.1}, \eqref{app:PDO7} in the last line. 
Put  
\begin{equation}\label{app:eg6}
	B_{k+1}(t) =B_k(t) - \Op_h(c_{k+1}(t)).
\end{equation}
Then, for each $k\in\N^*$ 
\begin{equation}\label{app:eg7}
	hD_t B_k(t) = [P^\delta_h,B_k(t)] + E_{k}(t) 
			+ h^\infty \Psi^{-\infty}.
\end{equation}

3. By Borel summation, there exists a $b(t)\in S^{-\infty}_\beta(X)$ such that
\begin{equation*}
	b(t) \sim \sum_{k\in\N}b_k(t), 
	\quad b_0(t) = a(t)\in S_\beta^\comp,~b_k(t) = -c_k(t) 
	\in  h^{k(1-2\beta)}S_\beta^\comp, k\geq1.
\end{equation*}
Hence, $\Op_h(b(0)) = A + A_1$,  with $A_1\in h^\infty \Psi^{-\infty}$, and 
\begin{equation*}
	hD_t\Op_h(b(t)) = [P^\delta_h,\Op_h(b(t))] + R(t), \quad R(t)\in  h^\infty \Psi^{-\infty}
\end{equation*}
By integration, we get that 
\begin{equation}\label{app:eg9}
\begin{split}
	U_\delta(-t)AU_\delta(t)-\Op_h(b(t)) 
	&= U_\delta(-t)\big( A-U_\delta(t)\Op_h(b(t))U_\delta(-t) \big)U_\delta(t)
	\\
	&= -U_\delta(-t)\left[\int_0^t hD_s \left(U_\delta(s)\Op_h(b(s))U_\delta(-s)\right) ds \right] U_\delta(t) 
	   - U_\delta(-t)A_1U_\delta(t)
	\\
	& =	-\int_0^t U_\delta(s-t)R(s) U_\delta(t-s) ds - U_\delta(-t)A_1U_\delta(t).
\end{split}
\end{equation}
Since $R(s)\in h^\infty \Psi^{-\infty}$, we see by \eqref{eq:SchTyp5} 
that $U_\delta(-t)AU_\delta(t)-\Op_h(b(t)) \in h^\infty \Psi^{-\infty}$. 
Applying the same argument to the term containing $A_1$, we conclude 
\eqref{app:eg0}. 
\par
The statement on the wave front set follows from \eqref{app:PDO6.1} and 
the observation that $\supp c_{k+1}(t) \subset \Phi^{-t}_\delta(\supp a)$, 
see \eqref{app:eg4}, as is true for $a(t)$. 
\end{proof}

\end{document}